\title{Breathers and the dynamics of solutions in KdV type equations}
\author{Claudio Mu\~noz}
\author{Gustavo Ponce}
\thanks{CM was partially supported by Fondecyt no. 1150202, Millennium Nucleus Center for Analysis of PDE NC130017, Fondo Basal CMM, and MathAmSud EEQUADD collaboration Math16-01. Part of this work was done while the first author was visiting Fields Institute (Toronto, Canada), as part of the ``Focus Program on Nonlinear Dispersive Partial Differential Equations and Inverse Scattering''.}
\address{CNRS and Departamento de Ingenier\'ia Matem\'atica DIM-CMM UMI 2807-CNRS \\ Universidad de Chile, Santiago, Chile}
\email{cmunoz@dim.uchile.cl}
\address{Department of Mathematics \\ University of California-Santa Barbara, CA 93106-USA}
\email{ponce@math.ucsb.edu}
\date{\today}
\subjclass[2000]{Primary 37K15, 35Q53; Secondary 35Q51, 37K10}
\keywords{KdV equation, scattering, decay estimates, Virial}
\chardef\bslash=`\\ 
\newtheorem{thm}{Theorem}[section]
\newtheorem{cor}[thm]{Corollary}
\newtheorem{lem}[thm]{Lemma}
\newtheorem{prop}[thm]{Proposition}
\theoremstyle{remark}
\newtheorem{rem}{Remark}[section]
\numberwithin{equation}{section}
\newcommand{\R}{\mathbb{R}}
\newcommand{\Z}{\mathbb{Z}}
\newcommand{\la}{\lambda}
\newcommand{\al}{\alpha}
\newcommand{\bt}{\beta}
\newcommand{\ga}{\gamma}
\newcommand{\sech}{\operatorname{sech}}
\newcommand{\be}{\begin{equation}}
\newcommand{\ee}{\end{equation}}
\newcommand{\bp}{\begin{proof}}
\newcommand{\ep}{\end{proof}}
\newcommand{\bel}{\begin{equation}\label}
\newcommand{\eeq}{\end{equation}}
\newcommand{\bea}{\begin{eqnarray}}
\newcommand{\eea}{\end{eqnarray}}
\newcommand{\bee}{\begin{eqnarray*}}
\newcommand{\eee}{\end{eqnarray*}}
\newcommand{\ben}{\begin{enumerate}}
\newcommand{\een}{\end{enumerate}}
 \providecommand{\abs}[1]{\lvert#1 \rvert}
\newcommand{\ve}{\varepsilon}
\newcommand{\eval}[2][\right]{\relax
  \ifx#1\right\relax \left.\fi#2#1\rvert}
\let\abs=\envert
\begin{document}
\begin{abstract}
In this paper our first aim is to identify a large class of non-linear functions $\,f(\cdot)\,$ for which the IVP for the generalized Korteweg-de Vries equation does not have breathers or \lq\lq small" breathers solutions. Also we  prove that all small, uniformly in time $L^1\cap H^1$ bounded solutions to KdV and related perturbations must converge to zero, as time goes to infinity,  locally in an increasing-in-time region of space of order $t^{1/2}$ around any compact set in space. This set is included in the linearly dominated dispersive region $x\ll t$.  Moreover, we prove this result independently of the well-known supercritical character of KdV scattering. 
In particular, no standing breather-like nor solitary wave structures exists in this particular regime. 
\end{abstract}
\maketitle \markboth{KdV decay estimates}{C. Mu\~noz-G. Ponce}
\renewcommand{\sectionmark}[1]{}

\section{Introduction and main results}


This work is concerned with the generalized Korteweg-de Vries equation (gKdV) posed in the real line $\R$:
\be\label{gKdV} 
\begin{aligned}
 \partial_t u + \partial_{x}(\partial_x^2 u +f(u)) = 0,\qquad (t,x) \in & ~ \R\times \R.
\end{aligned}
\ee
Here and along this paper $u=u(t,x) \in \R$ is a real-valued function. We will assume that $f=f(s)$ is a focusing\footnote{The case of defocusing nonlinearities is in some sense simpler and can be obtained from the ideas in this paper with somehow easier proofs.}, polynomial-type nonlinearity, in the sense that for $k=2,3,\ldots$ and $f_k:\R \to \R$ of class $C^k$,
\be\label{f(s)}
f(s) = u^k + f_k(s), \quad \quad \lim_{s\to 0} \frac{f_k(s)}{|s|^k} =0.
\ee
Three interesting cases for $f$ as above are 
\ben
\item $k=2$ and $f_2(s)=0$ (the original Korteweg-de Vries (KdV) equation), 
\item $k=2$ and $f_2(s) = \mu s^3$, $\mu \neq 0$ real-valued (the Gardner equation), and finally, 
\item $k=3$ and $f_3(s)=0$ (the modified KdV equation, or mKdV). 
\een
These are well-known integrable models \cite{AS}, and possibly the only gKdV integrable equations, under reasonable hypotheses on the nonlinearity. A rigorous  ``classification result'' involving these nonlinearities and the remaining non-integrable class can be found e.g. in \cite{Munoz}, generalization of previous and foundational work by Martel and Merle \cite{MMcol} for $k=4$ and $f_4(s)=0$. 

\medskip
Formally,  solutions of \eqref{gKdV} satisfy at least three conservation laws:
\begin{equation}
\label {CL}
\begin{aligned}
I_1(u)&:=\int_{-\infty}^{\infty}u(t,x)dx=I_1(u_0),\quad \;\;\;I_2(u):=\int_{-\infty}^{\infty}u^2(t,x)dx=I_2(u_0),\\
\\
I_3(u)&:=\int_{-\infty}^{\infty}\left(\frac12 (\partial_xu)^2 -G(u)\right)(t,x)dx=I_3(u_0),
\end{aligned}
\end{equation}
where
\begin{equation}\label{cl}
G(x)=\int_0^xf(s)ds.
\end{equation}

\medskip

For the simpler pure power case $f(s) =s^k$, the Cauchy problem for \eqref{gKdV} is globally well-posed in $H^1(\R)$ for $k=2,3$ and $4$, with uniform bounds in time for $\|u(t)\|_{H^1}$ thanks to the conservation laws $\,I_2(u)$ and $\,I_3(u)$ in \eqref{CL}, see Kenig-Ponce-Vega \cite{KPV1}. On the other hand, \eqref{gKdV} with $f_k(s)\neq 0$ is globally well-posed if e.g. the initial data is sufficiently small. The Gardner equation is $H^1$ globally well-posed independently of the size of the initial data \cite{KPV1}.

\medskip

It is well-known that  \eqref{gKdV} may have soliton (or solitary wave) solutions of the form
\be\label{Soliton}
u(t,x)= Q_c(x-ct),  \quad c>0,
\ee
where $Q_c$ is a stationary solution to the ODE 
\begin{equation}
\label{solitons}
Q_c''-cQ_c +f(Q_c)=0,\;\;\;\;\;\;\;\;\;\;Q_c\in H^1(\R),
\end{equation}
provided $f$ satisfies standard assumptions. Additionally, it is well-known that both mKdV and Gardner models do have stable breather solutions \cite{AS,AM,AM1,AM2,Alejo}, that is to say, \emph{localized in space solutions which are also periodic in time}, up to the symmetries of the equation. An example of these type of solutions is the mKdV breather: for any $\al,\bt>0$, 
\be\label{breather}
\begin{aligned}
B(t,x) :=  ~2\sqrt{2} \partial_x \arctan \left(\frac{\beta  \sin(\al (x+\delta t))}{\al \cosh(\beta (x+\ga t))} \right),\quad \delta=  \al^2 -3\bt ^2, ~\gamma= 3\al^2 -\bt ^2,
\end{aligned}
\ee
is a solution of mKdV with nontrivial time-periodic behavior, up to the translation symmetries of the equation. Therefore, generalized KdV equations may have both solitary waves and breathers as well, and both classes of solutions do not decay.

\medskip

The Gardner equation  ($f_2(u):= \mu u^3$ here)
\be\label{Gardner}
\partial_t u + \partial_{x}(\partial_x^2 u +  u^2 + \mu u^3) = 0, \quad  \mu>0,
\ee
possesses (stable) breather solutions, see \cite{Alejo} for more details. Indeed,  if  $\al, \bt>0$ are such that $\Delta:=\al^2+\bt^2-\frac2{9\mu}>0$, then
\begin{equation}
\label {BGe}
B(t,x) :=   2\sqrt{\frac 2\mu}\partial_x\arctan\left(\frac{\mathcal G(t,x)}
{\mathcal F(t,x)}\right),
\end{equation}
where
\begin{equation}
\label{BGe2}
\begin{aligned}
\mathcal G & :=   \frac{\bt\sqrt{\al^2+\bt^2}}{\al\sqrt{\Delta}}\sin(\al y_1) -\frac{\sqrt2 \bt[\cosh(\bt y_2)+\sinh(\bt y_2)]}{3\sqrt{\mu}\Delta},\\
\mathcal F & :=   \cosh(\bt y_2)-\frac{\sqrt2 \bt[\al\cos(\al y_1)-\bt\sin(\al y_1)]}{3\sqrt{\mu}\al\sqrt{\al^2+\bt^2}\sqrt{\Delta}},
\end{aligned}
\end{equation}
and $y_1 = x+ \delta t$, $y_2 = x+ \ga t$ , $\delta := \al^2-3\bt^2$, $\ga :=3\al^2-\bt^2$, is a smooth non decaying solution to \eqref{Gardner}.
\medskip

Breather solutions of the IVP \eqref{gKdV} are known only in the previously described  cases, i.e. for the mKdV and for the Gardner equation. Our first goal here is to show that for a large class of nonlinearities $\,f\,$ in \eqref{gKdV} the gKdV does not possess breather solutions (localized in space solutions which are periodic in time). 
\medskip

We recall the standard notation
\begin{equation}
\label{space}
H^{s,r}(\R):= H^s(\R)\cap L^2(|x|^{2r}dx),\;\;\;\;\;\;\;\;\;s,\,r\geq 0.
\end{equation}

\begin{thm}[Nonexistence of breathers]\label{theo:1}
 
Let
\begin{equation}
\label{clas1}
u\in C(\R:H^{4,2}(\R)) 
\end{equation}
be a  non-trivial strong solution of the IVP \eqref{gKdV}. If  $u=u(t,x)$ is a time periodic function with period $\omega$, then
\begin{equation}
\label{A1}
\int_0^{\omega}\,\int_{-\infty}^{\infty} f(u(t,x))dx\,dt=0.
\end{equation}
In particular, if 
\begin{equation}
\label{a1}
\begin{cases}
\begin{aligned}
&f(u)=u^2,\;\;\;\;\;\;\;\text{or}\;\;\;\;\;\;\;f(u)=u^4,\\
&\text{or}\\
&f(u)=u^2\pm\mu u^3+\epsilon^2u^4,\;\;\;\;\;\;\;0\leq\mu<2\epsilon,
\end{aligned}
\end{cases}
\end{equation}
then $u$ cannot be  a  time periodic solution.

\end{thm}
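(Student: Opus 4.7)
The heart of the argument is a ``first-moment'' identity obtained from pairing the equation with $x$. The weighted regularity class $H^{4,2}(\R)$ in \eqref{clas1} is chosen exactly so that this identity is rigorous.

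I would start by checking that $M(t) := \int_{\R} x\,u(t,x)\,dx$ is well-defined and of class $C^1$ in $t$. Finiteness follows from Cauchy--Schwarz by writing
\[
|x\,u(t,x)| \;=\; \frac{|x|}{1+x^2}\,(1+x^2)\,|u(t,x)|,
\]
with the first factor in $L^2(\R)$ and the second in $L^2(\R)$ because $x^2 u(t,\cdot)\in L^2$ (the $r=2$ weight). Differentiability follows by commuting $\partial_t$ with the integral, using continuity of $u$ in $H^{4,2}$ and the equation \eqref{gKdV} to control $\partial_t u$. Then, using \eqref{gKdV} and integrating by parts twice,
\[
\frac{d}{dt} M(t) \;=\; -\int_{\R} x\,\partial_x\bigl(\partial_x^2 u + f(u)\bigr)\,dx \;=\; \int_{\R}\bigl(\partial_x^2 u + f(u)\bigr)\,dx \;=\; \int_{\R} f(u(t,x))\,dx,
\]
where all boundary terms at $\pm\infty$ vanish because the $H^4$ regularity together with the spatial weights forces $u$, $\partial_x u$, $\partial_x^2 u$ to decay at infinity (via Sobolev embedding), and $\int \partial_x^2 u\,dx=0$ for the same reason. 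Integrating from $0$ to $\omega$ and invoking periodicity $M(\omega)=M(0)$ gives \eqref{A1}.

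To conclude nonexistence in the three families in \eqref{a1}, I would combine \eqref{A1} with conservation of $I_2(u)=\|u(t)\|_{L^2}^2$; this holds for every admissible $f$ via the standard $L^2$ energy computation. Since $u$ is nontrivial, $\|u(t)\|_{L^2}>0$ for every $t$. In the cases $f(u)=u^2$ and $f(u)=u^4$ one has $\int f(u)\,dx>0$ whenever $u(t)\not\equiv 0$, so $\int_0^{\omega}\!\int f(u)\,dx\,dt>0$, contradicting \eqref{A1}. In the case $f(u)=u^2\pm\mu u^3+\epsilon^2 u^4=u^2\bigl(1\pm \mu u + \epsilon^2 u^2\bigr)$, the bracket is a quadratic in $u$ of discriminant $\mu^2-4\epsilon^2<0$ under the assumption $0\le\mu<2\epsilon$; hence the bracket is strictly positive on $\R$, so $f(u)\ge 0$ with strict inequality on $\{u\neq 0\}$, and again $\int_0^{\omega}\!\int f(u)\,dx\,dt>0$, contradicting \eqref{A1}.

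The analytic obstacle is mild and purely technical: justifying differentiation under the integral sign and discarding the boundary terms with only $H^{4,2}$ regularity. The weight $|x|^2$ combined with the Sobolev embedding $H^4 \hookrightarrow C^3$ (with decay at spatial infinity coming from the joint integrability/smoothness) is tailored to make all these manipulations valid, including the fact that $f(u)$ is integrable since $f(u)=O(u^2)$ near the origin and $u$ decays at spatial infinity.
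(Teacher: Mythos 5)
Your proposal is correct and follows essentially the same route as the paper: the identity $\frac{d}{dt}\int x\,u\,dx=\int f(u)\,dx$ (the paper's Proposition \ref{prop:1}), integration over a period to get \eqref{A1}, and then strict positivity of $f$ in each case of \eqref{a1} (your discriminant computation for $f(u)=u^2\pm\mu u^3+\epsilon^2u^4$ is exactly the point the paper leaves implicit). The paper phrases the conclusion as strict monotonicity of $\Omega(t)=\int x\,u\,dx$ rather than a contradiction with \eqref{A1}, but this is the same argument.
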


\begin{rem}
 The hypothesis $\,u(t)\in H^{4,2}(\R)$ is just needed to guarantee that $\,xu(t,x)\in L^1(\R)$, so it can be reduced to $\,u(t)\in H^{3^+,(3/2)^+}(\R)$. We recall that in \cite{ILP} the solution flow of the \eqref{gKdV} preserves the class $H^{s,r}(\R)$ only if $s\geq 2r$. 
\end{rem}

\begin{rem}
 The third case in \eqref{a1}, $\,f(u)=u^2\pm\mu u^3+\epsilon^2u^4,\;0\leq\mu<2\epsilon\,$ gives an example of a higher order perturbation of the Gardner equation without breather solutions of any size.
\end{rem}

\begin{rem} 
We recall that global solutions for the IVP \eqref{gKdV} as those in \eqref{clas1} are known to exist provided $\,f(\cdot)\,$
satisfies a growth condition, for example :
\begin{equation}
\label{gra}
\lim_{|s|\to\infty} \frac{f'(s)}{|s|^{4}} = 0,
\end{equation}
and $\,u_0\in H^{4,2}(\R)$, see \cite{Ka}.
\end{rem}

\vskip.2in 
\begin{thm}[Even nonlinearities, general case]\label{theo:2}
 
Let
\begin{equation}
\label{clas2}
u\in C(\R:H^{4,2}(\R)) \cap L^{\infty}(\R: H^1(\R))
\end{equation}
be a  non-trivial strong solution of the IVP \eqref{gKdV}, 
with
\begin{equation}
\label{a2}
f(u)= u^{2k}+f_{2k}(u),\; \;\;\;\;\;\;k\in\Z^+.
\end{equation}
There exists a small $\epsilon=\epsilon(k;f_{2k})>0$ such that if
\begin{equation}
\label{a3}
\sup_{t\in\R}\|u(t)\|_{H^1}<\epsilon,
\end{equation}
then $u=u(t,x)$ is not  a  time periodic solution.
\end{thm}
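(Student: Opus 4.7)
The plan is to combine the integral rigidity of Theorem~\ref{theo:1} with the observation that, in the small-amplitude regime, an even leading order $u^{2k}$ forces $f(u)$ to be pointwise nonnegative, ruling out any cancellation. Since $u\in C(\R; H^{4,2}(\R))$ meets the hypotheses of Theorem~\ref{theo:1}, the assumed $\omega$-periodicity yields for free
\[
\int_0^{\omega}\!\!\int_{-\infty}^{\infty} f(u(t,x))\,dx\,dt \;=\; 0.
\]

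Next, by the one-dimensional Sobolev embedding, $\|u(t)\|_{L^\infty(\R)}\le C\,\|u(t)\|_{H^1(\R)}<C\epsilon$ uniformly in $t$. The condition $\lim_{s\to 0}f_{2k}(s)/|s|^{2k}=0$ coming from \eqref{f(s)} produces $\delta=\delta(k,f_{2k})>0$ with $|f_{2k}(s)|\le \tfrac12\, s^{2k}$ whenever $|s|\le\delta$. Choosing $\epsilon=\epsilon(k,f_{2k})>0$ so small that $C\epsilon\le\delta$ guarantees
\[
f(u(t,x)) \;=\; u(t,x)^{2k} + f_{2k}(u(t,x)) \;\ge\; \tfrac12\, u(t,x)^{2k} \;\ge\; 0
\]
pointwise on $\R\times\R$.

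Combining these two facts yields
\[
0 \;=\; \int_0^{\omega}\!\!\int_{\R} f(u)\,dx\,dt \;\ge\; \tfrac12\int_0^{\omega}\!\!\int_{\R} u^{2k}\,dx\,dt \;\ge\; 0,
\]
which forces $u\equiv 0$ almost everywhere on $[0,\omega]\times\R$; continuity in $t$ and $\omega$-periodicity then upgrade this to $u\equiv 0$ on $\R\times\R$, contradicting nontriviality. Beyond invoking Theorem~\ref{theo:1}, there is no substantive obstacle. The only routine verification is that $f(u(t,\cdot))\in L^1(\R)$ uniformly in $t$, which is immediate from $u\in L^\infty_t H^1_x$ (hence $u\in L^\infty_t L^p_x$ for every $p\in[2,\infty]$) together with the bound $|f(u)|\lesssim u^{2k}$ in the small-amplitude regime. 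All the content of Theorem~\ref{theo:2} lies in coupling the time-periodic integral identity of Theorem~\ref{theo:1} with the coercivity $f(u)\gtrsim u^{2k}$ that an even leading exponent provides.
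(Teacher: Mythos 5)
Your proposal is correct and follows essentially the same route as the paper: the paper's proof of Theorem~\ref{theo:2} likewise observes that \eqref{a2} and \eqref{a3} force $f(u(t,x))\ge 0$ pointwise (via the $H^1\hookrightarrow L^\infty$ smallness) and then feeds this into the monotonicity of $\Omega(t)=\int x\,u\,dx$ from Proposition~\ref{prop:1}, exactly the mechanism behind the identity \eqref{A1} you invoke. You merely spell out in more detail the choice of $\epsilon$ and the final step $\int_0^\omega\!\int u^{2k}=0\Rightarrow u\equiv 0$, which the paper leaves implicit.
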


\vskip.2in 

\begin{rem}
In the case $k=1$, taking $\,f(u)=u^2+\mu u^3 +f_3(u)\,$ we get that the Gardner equation and any \lq\lq small" perturbation of it can not have $H^1$-small breather solutions. For the Gardner equation this is consistence with the properties of the breathers described in \eqref{BGe}-\eqref{BGe2}, see the third remark (Remark \ref{Gardner_remark}), after the statement of Theorem \ref{TH2}.
\end{rem}

\vskip.1in

A second result is the following.

\begin{thm}[$L^2$ critical and supercritical cases]\label{theo:3}
 
Let
\begin{equation}
\label{clas3}
u\in C(\R:H^{1,1/2}(\R)) \cap L^{\infty}(\R: H^1(\R))
\end{equation}
be a non-trivial strong solution of the IVP \eqref{gKdV} with 
\begin{equation}
\label{a5}
f(u)=u^{j}+f_{j}(u), \;\;\;\; \;\;\;\;j=5,6,..
\end{equation}
There exists a small $\epsilon=\epsilon(j;f_{j})>0$ such that if
\begin{equation}
\label{a6}
\sup_{t\in\R}\|u(t)\|_{H^1}<\epsilon,
\end{equation}
then  $u=u(t,x)$ is not  a  time periodic solution.
\end{thm}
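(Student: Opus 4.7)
The plan is to exploit time-periodicity through a weighted virial identity with the unbounded weight $\phi(x)=x$; by the hypothesis $u\in C(\R;H^{1,1/2}(\R))$, the quantity $\int x\,u^2\,dx$ is absolutely convergent, continuous in $t$, and consequently periodic.

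First I would derive the virial identity. Multiplying \eqref{gKdV} by $2xu$ and integrating by parts (moving the $\partial_x$ in $\partial_x(u_{xx}+f(u))$ onto $xu$ avoids having to interpret $u_{xxx}$ pointwise, and boundary terms vanish thanks to $u\in L^2(|x|\,dx)\cap H^1$), one obtains
\begin{equation*}
\frac{d}{dt}\int_{\R} x\, u^2\,dx \;=\; -3\int_{\R} u_x^2\,dx \;+\; 2\int_{\R}\bigl(u\,f(u)-F(u)\bigr)dx,
\end{equation*}
where $F'=f$. Writing $f(u)=u^{j}+f_{j}(u)$, the nonlinear bulk term equals $\tfrac{2j}{j+1}\int u^{j+1}dx$ plus a remainder controlled by $C\int|u|^{j+2}dx$, since $f_{j}(s)=o(|s|^{j})$ and $f_j\in C^j$ force $|f_{j}(s)|\lesssim|s|^{j+1}$ near $0$.

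Next I would use periodicity: since $u(t+\omega,\cdot)=u(t,\cdot)$, integrating the identity on $[0,\omega]$ kills the left-hand side and yields
\begin{equation*}
3\int_{0}^{\omega}\!\!\int_{\R}u_x^2\,dx\,dt \;=\; 2\int_{0}^{\omega}\!\!\int_{\R}\bigl(u f(u)-F(u)\bigr)dx\,dt.
\end{equation*}
To control the right-hand side, I apply the one-dimensional Gagliardo--Nirenberg inequality $\|v\|_{L^{j+1}}^{j+1}\le C\|v\|_{L^{2}}^{(j+3)/2}\|v_x\|_{L^{2}}^{(j-1)/2}$. Because $j\ge5$, the exponent $(j-1)/2\ge2$, so I split $\|u_x\|_{L^{2}}^{(j-1)/2}=\|u_x\|_{L^{2}}^{2}\,\|u_x\|_{L^{2}}^{(j-5)/2}$ and absorb the extra factor using $\|u(t)\|_{H^1}<\epsilon$; together with $\|u\|_{L^{2}}^{(j+3)/2}\le\epsilon^{(j+3)/2}$ this produces
\begin{equation*}
\int_{\R}|u|^{j+1}dx \;\le\; C\,\epsilon^{j-1}\,\|u_x\|_{L^{2}}^{2},
\end{equation*}
and the correction $\int|u|^{j+2}\le\|u\|_{L^{\infty}}\int|u|^{j+1}\le C\epsilon^{j}\|u_x\|_{L^{2}}^{2}$ is even smaller. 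Plugging these bounds back gives
\begin{equation*}
3\int_{0}^{\omega}\|u_x(t)\|_{L^{2}}^{2}\,dt \;\le\; C\,\epsilon^{j-1}\int_{0}^{\omega}\|u_x(t)\|_{L^{2}}^{2}\,dt,
\end{equation*}
and choosing $\epsilon$ small enough (depending on $j$ and $f_{j}$) forces $\int_{0}^{\omega}\|u_x(t)\|_{L^{2}}^{2}dt=0$. Hence $u_x\equiv 0$ on $[0,\omega]\times\R$; combined with $u(t)\in L^{2}(\R)$ this implies $u\equiv 0$, contradicting non-triviality.

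The main obstacle I anticipate is the rigorous justification of the virial identity with the unbounded weight $\phi(x)=x$, in particular that $t\mapsto\int xu^{2}(t,x)\,dx$ is absolutely continuous on $[0,\omega]$ with the stated derivative, and that all spatial integrations by parts are legitimate at this low regularity. The standard remedy is to replace $x$ by $\phi_{R}(x)=x\,\chi(x/R)$ with $\chi\in C_{c}^{\infty}(\R)$ equal to $1$ near the origin, derive the truncated identity (which only uses $u\in C(\R;H^{1})$), and pass to the limit $R\to\infty$; the weighted control $u\in L^{2}(|x|\,dx)$ coming from $H^{1,1/2}(\R)$ is precisely what dominates the commutator terms involving $\chi'$ and $\chi''$, in the same spirit as the truncation used in the proof of Theorem~\ref{theo:1}.
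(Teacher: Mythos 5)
Your proposal is correct and follows essentially the same route as the paper: the weighted virial $\Lambda(t)=\int x\,u^2\,dx$, the identity $\frac{d}{dt}\Lambda=-3\int u_x^2+2\int(uf(u)-F(u))$ (which is exactly the paper's \eqref{con2} rewritten using $I_3$), and Gagliardo--Nirenberg with the exponent $(j-1)/2\ge 2$ on $\|u_x\|_{L^2}$ so that the $H^1$-smallness absorbs the nonlinear term; the paper simply concludes by strict monotonicity of $\Lambda$ rather than by integrating over a period. The only tiny overstatement is that $f_j\in C^j$ with $f_j(s)=o(|s|^j)$ gives only $o(|s|^j)$, not necessarily $O(|s|^{j+1})$, but the $o(|s|^j)$ bound already suffices for your absorption argument.
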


\vskip.1in
\begin{rem}  
In  the case $\,j=5\,$ and $\,f_5(u)\equiv 0$, i.e. $\,f(u)=u^5$, the assumption \eqref{a6} can be replaced by a weaker one : the initial data $\,u_0\,$ satisfies
\begin{equation}
\label{seze-data}
\|u_0\|_2\,\leq\,\sqrt[4]{\frac{3}{5}}\,\| Q\|_2,
\end{equation}
where $\,Q\,$ is the solution of \eqref{solitons} with $\,f(u)=u^5$. In this setting the proof follows by combining the argument given below in the proof of Theorem \ref{theo:3} with the best constant $c_6$ for  the Gagliardo-Nirenberg inequality \eqref{GN} with $p=6$ found in \cite{We}, i.e. $c_6=\sqrt[6]{3}\,\|Q\|_2^{-2/3}$. 
\end{rem}

\begin{rem} Restricting ourselves to pure power non-linearity $f(u)=u^k,\,k=2,3,..$ and to the frame where the existence of global solutions are known Theorems \ref{theo:1}-\ref{theo:3} tell us that a breather can only occur in the case $k=3$, i.e. for the MKdV. 
\end{rem}

\vskip.1in

We also consider  nontrivial perturbations of the mKdV equation.

\begin{thm}[Cubic case]\label{theo:4}
 
Let
\begin{equation}
\label{clas4}
u\in C(\R:H^{1,1/2}(\R)) \cap L^{\infty}(\R: H^1(\R))
\end{equation}
be a  non-trivial strong solution of the IVP \eqref{gKdV} with 
\begin{equation}
\label{a7}
f(u)=u^{3}+\beta u^5+f_{5}(u), \;\;\;\;\; \;\;\beta<0,
\end{equation}
and  initial data $u_0$ such that (see \eqref{CL})
\begin{equation}
\label{a8}
I_3(u)=I_3(u_0)\geq 0.
\end{equation}
There exists a small $\epsilon=\epsilon(f_{5};\beta;I_3(u_0))>0$ such that if
\begin{equation}
\label{a9}
\sup_{t\in\R}\|u(t)\|_{H^1}<\epsilon,
\end{equation}
then  $u=u(t,x)$ is not  a  time periodic solution.

The same conclusion applies if $f(\cdot)$ satisfies \eqref{a7} with $\beta>0$ and the initial data $u_0$ holds that $I_3(u_0)\leq 0$.
\end{thm}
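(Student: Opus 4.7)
The plan is to follow the virial-plus-conservation-law strategy of Theorems \ref{theo:2} and \ref{theo:3}, arranging the algebra so that the scale-invariant $\int u^4$ contributions cancel out and the coercive term comes from the $\beta u^5$ perturbation. Setting $F_5(u):=\int_0^u f_5(s)\,ds$, the standard first-moment computation based on the equation and integration by parts (using the weighted decay provided by $u\in H^{1,1/2}(\R)$) gives
\begin{equation}
\frac{d}{dt}\int_\R x\,u^2\,dx = -3\int_\R u_x^2\,dx + 2\int_\R u f(u)\,dx - 2\int_\R G(u)\,dx,
\end{equation}
which for $f(u)=u^3+\beta u^5+f_5(u)$ evaluates to
\begin{equation}
\frac{d}{dt}\int_\R x\,u^2\,dx = -3\int_\R u_x^2 + \tfrac{3}{2}\int_\R u^4 + \tfrac{5\beta}{3}\int_\R u^6 + \int_\R R_f(u)\,dx,
\end{equation}
where $R_f(u):=2u f_5(u)-2F_5(u)$ satisfies $|R_f(u)|=o(|u|^6)$ as $u\to 0$.

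Now suppose, for contradiction, that $u$ is periodic of period $\omega>0$. Integrating the identity above in $t\in[0,\omega]$ annihilates the left-hand side, while $I_3$-conservation yields
\begin{equation}
\int_\R u_x^2\,dx = 2 I_3(u_0) + \tfrac{1}{2}\int_\R u^4 + \tfrac{\beta}{3}\int_\R u^6 + 2\int_\R F_5(u)\,dx.
\end{equation}
Using this to eliminate $\int_0^\omega\!\int_\R u_x^2\,dx\,dt$, the quartic contributions cancel exactly and one is left with
\begin{equation}
6\,I_3(u_0)\,\omega - \tfrac{2\beta}{3}\int_0^\omega\!\!\int_\R u^6\,dx\,dt = \int_0^\omega\!\!\int_\R\bigl(R_f(u)-6 F_5(u)\bigr)dx\,dt.
\end{equation}
The cancellation of the $\int u^4$ terms is the crux: for pure mKdV ($\beta=0$, $f_5\equiv 0$) this would collapse to the trivial identity $I_3(u_0)\omega=0$, perfectly consistent with an mKdV breather; the $\beta u^5$ correction is precisely what supplies a definite-sign coercive quantity on the left.

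With $\beta<0$ and $I_3(u_0)\geq 0$, both terms on the left-hand side are nonnegative. By Sobolev embedding $H^1\hookrightarrow L^\infty$, the smallness \eqref{a9}, and the pointwise bounds $|F_5(u)|,|R_f(u)|\leq \eta(\|u\|_\infty)|u|^6$ with $\eta(s)\to 0$ as $s\to 0$, the right-hand side is bounded by $C\eta(C\epsilon)\int_0^\omega\!\int u^6\,dx\,dt$. Choosing $\epsilon=\epsilon(\beta,f_5,I_3(u_0))$ so that $C\eta(C\epsilon)<\tfrac{2|\beta|}{3}$ absorbs this remainder into the left and forces $I_3(u_0)\omega=0$ together with $\int_0^\omega\!\int u^6\,dx\,dt=0$. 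The $L^6$ vanishing combined with $t$-continuity in $H^{1,1/2}$ (hence in $L^6$) then gives $u\equiv 0$, contradicting nontriviality; the case $\beta>0$, $I_3(u_0)\leq 0$ is entirely symmetric. The main technical obstacle is to justify the virial identity rigorously at the minimal regularity $u\in C(\R:H^{1,1/2})$ --- the intermediate integral $\int x u_x u_{xx}\,dx$ is only formally defined --- and I would handle this exactly as for Theorem \ref{theo:3}, by smoothing the data in $H^{4,2}$, deriving the identity at the smooth level, and passing to the limit via continuous dependence of the flow together with the uniform bound \eqref{a9}.
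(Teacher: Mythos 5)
Your proposal is correct and follows essentially the same route as the paper: the same virial quantity $\int x u^2\,dx$, the same use of the conserved energy $I_3$ to eliminate $\int u_x^2$, the same exact cancellation of the $\int u^4$ terms leaving the sign-definite $\frac{2\beta}{3}\int u^6$ contribution, and the same absorption of the $o(|u|^6)$ remainders via smallness. The only cosmetic difference is that you integrate the identity over one period to reach a contradiction, whereas the paper concludes directly that $\int x u^2\,dx$ is strictly decreasing and hence cannot be periodic.
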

\vskip.2in 
\begin{rem}
 The assumption  $\,u(t)\in H^{1,1/2}(\R)$ tells us that $\,xu^2(t)\in L^1(\R)$, and under the condition \eqref{a5} it holds 
 if $u_0\in H^{1,1/2}(\R)$ with $\|u_0\|_{H^1}<\eta=\eta(j;f_j)$ is sufficiently small. 
\end{rem} 
\begin{rem}
We observe that non-linearity $\,f(\cdot)\,$ in \eqref{a7} may involve $L^2$-sub-critical and $L^2$-super-critical parts. In this case, the properties \eqref{clas4} and \eqref{a9} can be guarantee by assuming that the initial small quantities $\,\epsilon_1=\|u_0\|_{L^2}$ and $\,\epsilon_2=\|\partial_xu_0\|_{L^2}$ satisfy appropriate relations, for example, $\epsilon_2>c\epsilon_1^3$. Under these assumption one also has that \eqref{a8} holds.

\end{rem}

\vskip.1in

\begin{rem} 
The proof of Theorems \ref{theo:1}-\ref{theo:4} are elementary and are based on the time evolution of the quantities
\begin{equation}
\label{key}
\Omega(t):=\int_{-\infty}^{\infty}\,x\,u(t,x) dx,\;\;\;\;\;\text{and}\;\;\;\;\;\;\Lambda(t):=\int_{-\infty}^{\infty}\,x\,u^2(t,x) dx,
\end{equation}
which are time periodic function if $\,u=u(t,x)$ is also time periodic.

\end{rem}

\medskip

 Our second goal is to establish precise decay in KdV-like equations ($k=2$ and $f_2\neq 0$). First, we review some results concerning decay of small solutions for gKdV equations. Kenig-Ponce-Vega \cite{KPV1} showed scattering for small data solutions of the $L^2$ critical gKdV equation ($p=5$ and $f_1=0$). Ponce and Vega \cite{PV} showed that for the case $f(s)=|s|^p$, $p>(9+\sqrt{73})/4\sim 4.39$, small data solutions in $L^{1}\cap H^{2}$ lead to decay, with rate $t^{-1/3}$ (i.e. linear rate of decay). Christ and Weinstein \cite{CW} improved  it to the case $f(s)= |s|^p$, $p>\frac14(23-\sqrt{57}) \sim 3.86$. Hayashi and Naumkin \cite{HN1,HN2} studied the case $p>3$, obtaining decay estimates and asymptotic profiles for small data in $\mathcal H^{1,1}$. Tao \cite{Tao} considered the scattering of data for the quartic KdV in the space $H^1 \cap \dot H^{-1/6}$ around the zero and the soliton solution. The finite energy condition was then removed by Koch and Marzuola in \cite{KM} by using $U-V$ spaces. C\^ote \cite{Cote} constructed solutions to the subcritical gKdV equations with a given asymptotical behavior, for $k=4$ and $k=5$.

\medskip

The case $k=3$ is critical in terms of scattering, and also critical with respect to the $L^1$ norm. Therefore, it is expected that small solutions do scatter following modified linear profiles. In \cite{GPR}, Germain, Pusateri and Rousset deal with the mKdV case ($k=3$) around the zero background and the soliton. By using Fourier techniques and estimates on space-time resonances, they were able to tackle down this ``critical'' case in terms of modified scattering. Using different techniques, related to Hardy type estimates, Kenig, Ponce and Vega \cite{KPV2} showed that solutions of gKdV decaying sufficiently fast must be identically zero. Finally, Isaza, Linares and Ponce \cite{ILP} showed bounds on the spatial decay estimates on \eqref{gKdV} with $k=2$ in sufficiently strong weighted Sobolev spaces. However, we recall that no scattering results as above mentioned seems to hold for the quadratic power ($k=2$), which can be considered as ``supercritical'' in terms of modified scattering, since the nonlinear term behaves (under the linear Airy decay dynamics $\sim t^{-1/3}$) as $\partial_x u u \sim \frac{1}{t^{2/3}} u$, and can be regarded as linear term with a potential which is clearly not integrable in time.
\medskip

Let $C>0$ be an arbitrary constant, and let $I(t)$ be the time-depending interval 
\be\label{Interval}
I(t):= \left( - \frac{C |t|^{1/2}}{\log |t|} , \frac{C |t|^{1/2}}{\log |t|} \right),  \quad |t|\geq 2.
\ee
Clearly $I(t)$ contains any compact interval for $t$ sufficiently large.
 
\begin{thm}\label{TH1}
Assume $k=2$ and $f_2(s)=0$ in \eqref{f(s)}. Suppose that $u=u(x,t)$ is a solution to \eqref{gKdV} such that
 \begin{equation}
 \label{m1}
 u\in C(\R:H^1(\R)) \cap L^\infty(\R:L^1(\R)),
\end{equation}
and there exists $\ve>0$ such that 
\be\label{Smallness0}
\sup_{t\in \R}\|u(t)\|_{H^1} <\ve.
\ee
Then, one has
\be\label{AS}
\lim_{t\to \pm\infty}\|u(t) \|_{ H^1(I(t))} =0.
\ee
Additionally, the following Kato-smoothing estimate holds: for any $c_0>0$,
\be\label{Integra50}
\int_2^\infty \!\! \int_{-\infty}^{\infty} e^{-c_0|x|} (u^2+ (\partial_x u)^2 + (\partial_x^2 u)^2)(t,x)dx dt <+\infty.
\ee
Moreover, no small soliton nor breather solution exists for KdV inside the region $I(t)$, for any time $t$ sufficiently large.
\end{thm}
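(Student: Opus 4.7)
The plan is to derive \eqref{AS} and \eqref{Integra50} by virial (monotonicity) identities tailored to the KdV equation, with the non-existence of small solitons or breathers in $I(t)$ following immediately as a corollary. I would start with a fixed-weight $L^2$-virial: fix a smooth sigmoid $\phi:\R\to\R$ with $\phi(\pm\infty)=\pm 1$, $\phi'\geq 0$ decaying like $e^{-c_0|x|}$, and $|\phi'''|\leq c_1\phi'$ for a small $c_1>0$. Testing KdV $\partial_t u+\partial_x^3 u+\partial_x u^2=0$ against $\phi u$ and integrating by parts repeatedly yields
\begin{equation*}
\frac{d}{dt}\int \phi\,u^2\,dx \;=\; -3\int \phi'\,(\partial_x u)^2\,dx + \int \phi''' u^2\,dx + \tfrac{4}{3}\int \phi' u^3\,dx.
\end{equation*}
The left-hand side is uniformly bounded by \eqref{Smallness0}; the cubic term obeys $|\int \phi' u^3|\leq \|u\|_{L^\infty}\int \phi' u^2\lesssim \varepsilon\int \phi' u^2$, and the $\phi'''$ term is bounded by $c_1\int \phi' u^2$. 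For $\varepsilon$ and $c_1$ small enough, integrating in $t\geq 2$ yields $\int_2^\infty \int \phi'(x)[u^2+(\partial_x u)^2]\,dx\,dt<\infty$, which is the $L^2$/$H^1$ content of \eqref{Integra50}. The $(\partial_x^2 u)^2$ contribution is obtained by running the analogous virial for the $H^1$ energy density $(\partial_x u)^2-\tfrac{2}{3}u^3$, which again produces a Kato-type positive contribution $-3\int \phi'(\partial_x^2 u)^2$ with all errors of higher order in $(u,\partial_x u)$ absorbed by the $H^1$ smallness.

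To obtain the localized decay \eqref{AS}, I would repeat the argument with a time-dependent weight $\phi_t(x)=\phi(x/\lambda(t))$, where $\phi'$ is compactly supported in $[-2,2]$ and $\lambda(t)=C|t|^{1/2}/\log|t|$ matches \eqref{Interval}. The only new contribution, coming from $\partial_t\phi_t$, is
\begin{equation*}
-\frac{\dot\lambda(t)}{\lambda(t)}\int \frac{x}{\lambda(t)}\,\phi'\!\left(\frac{x}{\lambda(t)}\right)u^2\,dx,
\end{equation*}
bounded in absolute value by $(\dot\lambda/\lambda)\|u\|_{L^2}^2\lesssim \varepsilon^2/t$, while the favorable virial term reads $-\tfrac{3}{\lambda(t)}\int \phi'(x/\lambda(t))(\partial_x u)^2\,dx$. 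Combining with the fixed-weight smoothing through a Chebyshev/Cesàro argument delivers $\int_{I(t)}u^2\,dx\to 0$ as $|t|\to\infty$; the $\log|t|$ factor in $\lambda$ is precisely chosen so that $\int_2^T \dot\lambda/\lambda\,dt\sim \tfrac{1}{2}\log T$ grows strictly slower than the already integrated Kato quantities, closing a Gronwall-type inequality. The uniform-continuity ingredient needed to upgrade $L^1$-in-time integrability to pointwise decay is supplied by the PDE together with the smoothing of the first step. Finally, the $H^1$-version \eqref{AS} follows from the $L^2$ decay on $I(t)$, the uniform $H^1$ bound \eqref{Smallness0}, and the $(\partial_x^2 u)^2$-smoothing via one-dimensional Gagliardo--Nirenberg interpolation on $I(t)$.

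The non-existence of small solitons and breathers inside $I(t)$ is immediate: a non-trivial $t$-periodic breather, or any soliton whose center stays inside $I(t)$ for arbitrarily large $t$, must carry strictly positive $H^1$ mass in $I(t)$, contradicting $\|u(t)\|_{H^1(I(t))}\to 0$. The main technical obstacle I foresee lies in the second step: carefully verifying that the error term $(\dot\lambda/\lambda)\|u\|_{L^2}^2$ stays strictly subleading to the favorable Kato contribution, and then passing from $L^1$-in-time integrability of local mass to pointwise decay on the \emph{expanding} interval $I(t)$. This delicate balance is precisely what forces the $1/\log|t|$ correction in the definition \eqref{Interval} of $I(t)$ and prevents a simple scaling ansatz from yielding the full $t^{1/2}$ width.
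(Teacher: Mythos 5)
There is a genuine gap at the very first step of your argument, and it is precisely the difficulty the paper is built to circumvent. Your opening fixed-weight virial identity
\begin{equation*}
\frac{d}{dt}\int \phi\,u^2\,dx \;=\; -3\int \phi'\,(\partial_x u)^2\,dx \;+\; \int \phi'''\,u^2\,dx \;+\; \frac43\int \phi'\,u^3\,dx
\end{equation*}
has its only sign-definite (coercive) term in $(\partial_x u)^2$, while both error terms are of the size of the \emph{local $L^2$ norm} $\int\phi' u^2$. There is no weighted Poincar\'e inequality on the line that allows you to absorb $\int\phi' u^2$ into $-3\int\phi'(\partial_x u)^2$, so taking $\varepsilon$ and $c_1$ small does not help: integrating in time you only obtain $\int_2^T\!\int\phi'(\partial_x u)^2\lesssim \varepsilon^2+(c_1+C\varepsilon)\int_2^T\!\int\phi'u^2$, with the right-hand side a priori unbounded. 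Hence your claim that this step yields $\int_2^\infty\!\int\phi'\,[u^2+(\partial_x u)^2]\,dx\,dt<\infty$ is unjustified, and in particular the $u^2$ part of \eqref{Integra50} does not follow. A telling symptom is that your proof never uses the hypothesis $u\in L^\infty(\R:L^1(\R))$. The same issue reappears in your moving-weight step: the bound $|\dot\lambda/\lambda|\,\|u\|_{L^2}^2\lesssim\varepsilon^2/t$ is not integrable on $[2,\infty)$, so the Ces\`aro/Gronwall closure you describe does not close either.

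The missing idea is the paper's Step 1: one first considers the functional $\mathcal I(t)=\int\psi(x/\lambda(t))\,u(t,x)\,dx$, which is \emph{linear} in $u$ and uniformly bounded precisely because of the $L^1$ assumption. Its time derivative (Lemma \ref{dtI0}) contains the nonlinear term $\frac{1}{\lambda}\int\psi'(x/\lambda)\,u^2$, which for $\psi'=\sech^2\ge 0$ and $k=2$ is exactly the local $L^2$ norm \emph{with a definite sign}; the remaining terms are integrable in time thanks to the choice $\lambda(t)=t^{1/2}/\log t$ (this is where the logarithm is genuinely needed: it makes $(\lambda')^2\sim 1/(t\log^2 t)$ integrable after the Cauchy--Schwarz absorption in \eqref{Est1}, not for the reason you give). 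This yields Corollary \ref{Integra2} first, and only then can the quadratic and energy virials (your $\mathcal J$ and $\mathcal K$) be run, with all their sign-indefinite error terms --- including the $\dot\lambda/\lambda$ term and the cubic term --- controlled by the previously established time-integrability of the local $L^2$ norm. Your subsequent steps (derivative and second-derivative smoothing, upgrading integrability to a full limit via almost-monotonicity of $\mathcal J$ and $\mathcal K$, and the conclusion about solitons and breathers) are in the right spirit, but they all rest on the unestablished first step.
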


\begin{rem}
Note that Theorem \ref{TH1} requires only small solutions in $H^1$, and $L^1$ control uniform-in-time. This requirement is nonempty: solutions satisfying this condition are e.g. small solitons and multi-solitons (any finite number), see \eqref{Soliton}. Note also that solitons move away (in finite time) from the region $I(t)$ considered in \eqref{AS}, and they do not decay. These assumptions should be compared with the ones required in Kenig, Ponce and Vega \cite{KPV2} to show nonexistence of solitary wave solutions.
\end{rem}

\begin{rem}
Theorem \ref{TH1} can be complement by the following standard fact: all small solutions uniformly bounded in time in $H^1$ satisfy the asymptotic regime \cite{MM2,AM1}
\[
\lim_{t\to +\infty} \|u(t)\|_{H^1(x>v t)} =0,
\]
where $v=v(\|u(t=0)\|_{H^1})$, and $v\to 0^+$ as $\|u(t=0)\|_{H^1}\to 0$. This last convergence result is usually referred as \emph{decay in the soliton region}. Under the regime proposed in Theorem \ref{TH1}, the remaining regions $\left( -\infty, -\frac{C |t|^{1/2}}{\log |t|} \right) \cup \left(  \frac{C |t|^{1/2}}{\log |t|} , v t \right)$ are probably strongly dominated by modified linear decay. Note in addition that linear KdV (i.e. Airy equation) possesses as dispersion relation $\omega(k)=-k^3$, and its group velocity is given by $\omega'(k)=-3k^2\leq 0$. Theorem \ref{TH1} formally shows that when the nonlinearity is turn on, the linear mode $k=0$ decays to zero.
\end{rem}

\begin{rem}
If the initial data $u_0$ satisfies $u_0\in  H^{1,1}(\R)$ and $u \in L^\infty(\R:H^{0,1}(\R))$,
then the hypotheses of Theorem \ref{TH1} are satisfied. Indeed, by conservation of mass and energy the corresponding solution $u(t)$ satisfies $\sup_{t\in \R}\|u(t)\|_{L^\infty} \lesssim 1$, and 
$$\|u(t)\|_{L^1} =\int_{|x|<R} |u(t)| + \int_{|x|\geq R} \frac{|x|}{|x|} |u(t)| \lesssim \|u(t)\|_{L^\infty} R + R^{-1/2} \|u(t)\|_{H^{0,1}} \lesssim  \|u(t)\|_{H^{0,1}}^{2/3}.$$ 

However,  the condition $u_0 \in H^{1,1}(\R)$ is not preserved by the flow solution since this would imply that  $u_0\in H^{2,1}(\R)$, see \cite{ILP}
Theorem 1.4. \end{rem}

%

\begin{rem}
The restriction in Theorem \ref{TH1} to the interval $|x| \sim |t|^{\frac12} \log^{-1}|t|$ is probably consequence of the initial assumptions made, and conclusions should be slightly different if the $L^1$ boundedness is not required. We conjecture that decay to zero in the energy space should hold in any compact set. See also the works \cite{AM3,MPP,KMPP,KM} for decay in extended regions of space which profit of internal directions for decay. 
\end{rem}

\begin{rem}[About the mKdV case]
Our result is not valid for the mKdV case $k=3$ because of the existence of standing breathers, namely \eqref{breather} with $\ga=0$, i.e. $\bt =\pm\sqrt{3} \al$. From \cite{AM}, it is well-known that small $H^1$ breathers are characterized by the constraints $\bt$ small (\lq\lq small mass'') and $\al^2 \bt$ also small (``small energy''). These two conditions are clearly not in contradiction with the additional assumption $\ga=0$. Therefore, even small mKdV $H^1$ breathers may have zero velocity and do not scatter to infinity. For further results on the mKdV equation under initial conditions in weighted spaces, see \cite{HN2,H,GPR}. See also \cite{AM,AM1,AM2} for more properties on mKdV breathers.
\end{rem}

\begin{rem}
Since KdV is completely integrable, Inverse Scattering Transform (IST) methods can be applied to obtain formal asymptotics of the solution, by assuming initial data smooth and rapidly decaying. See Ablowitz and Segur \cite[p. 80]{AS}, Deift, Venakides and Zhou. \cite{DVZ} and Eckhaus and Schuur \cite{ES} and references therein for more details on these methods. (See also \cite{KMM3} for a detailed description of results for the KdV problem.) In these references, the region $|x|\lesssim t^{1/3}$ and its complement play an important role in terms of describing different dynamics, but under suitable decay assumptions. In this work, the $L^1$ boundedness and small $H^1$ conditions lead to a rigorous proof of decay in the larger region $|x|\lesssim t^{1/2}$, working even for integrable and non-integrable modifications of KdV. We recall that Theorem \ref{TH1} does not require the ``no soliton'' hypothesis, and it is proven with data only in a subset of the energy space. 
\end{rem}

\begin{rem}
Theorem \ref{TH1} is false if $u$ is assumed complex-valued. Indeed, $u(t,x):= \frac{-6}{(x+ i\ve)^2}$, for any real-valued $\ve\neq 0$, is a solution to KdV and counterexample to Theorem \ref{TH1}.
\end{rem}

An important advantage of the methods of proof is that we can extend Theorem \ref{TH1} to the case of KdV perturbations, including the integrable Gardner equation, provided the initial data is small in the $H^1$ norm.

\begin{thm}\label{TH2}
Consider $k=2$ and $f_2(s)\neq 0$ in \eqref{f(s)}. Assume that $u=u(t,x)$ is a solution to \eqref{gKdV} such that
\begin{equation}
\label{m2}
u\in C(\R:H^1(\R)) \cap L^\infty(\R:L^1(\R))
\end{equation}
  and there exists $\ve>0$ such that 
\be\label{Smallness}
\sup_{t\in \R}\|u(t)\|_{H^1} <\ve.
\ee
Then, given the same time-depending interval $I(t)$ as in \eqref{Interval}, one has \eqref{AS} and \eqref{Integra50}. In particular, all Gardner-like equations in \eqref{gKdV} cannot possess small, standing breather-like solutions.
\end{thm}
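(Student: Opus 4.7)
The plan is to follow very closely the strategy behind Theorem \ref{TH1}, treating the extra nonlinear piece $f_2(u)$ as a small perturbation that can be absorbed using the smallness assumption \eqref{Smallness}. Because $f_2(s)=o(s^2)$ at the origin and, under \eqref{Smallness}, $\|u(t)\|_{L^\infty}\lesssim \|u(t)\|_{H^1}^{1/2}\lesssim \sqrt\ve$, every flux term generated by $f_2(u)$ in a virial identity will carry an extra power of $\sqrt\ve$ compared with the analogous KdV term, and can therefore be absorbed by the coercive part of the identity for $\ve$ small enough. Three ingredients will enter the argument: a virial identity with a weight concentrated on the scale $\lambda(t)\sim |t|^{1/2}/\log |t|$ matching $I(t)$ in \eqref{Interval}; the uniform $L^1$-bound \eqref{m2}, which controls boundary-type contributions at the edges of $I(t)$; and the smallness \eqref{Smallness}, which handles both the cubic KdV term and the new $f_2$-piece.

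Concretely, I would take a smooth bounded profile $\psi$ with $\psi'\le 0$, set $\varphi(t,x):=\psi(x/\lambda(t))$ and study
\bel{V1-plan}
\mathcal I(t):=\int_{-\infty}^{\infty}\varphi(t,x)\,u^2(t,x)\,dx.
\eeq
Writing \eqref{gKdV} as $\partial_t u+\partial_x(\partial_x^2 u+u^2+f_2(u))=0$ and integrating by parts, a computation that runs exactly as in the proof of Theorem \ref{TH1} yields
\bel{V2-plan}
\tfrac{d}{dt}\mathcal I(t)=\int\partial_t\varphi\,u^2\,dx-3\int(-\partial_x\varphi)(\partial_x u)^2\,dx+\int\partial_x^3\varphi\,u^2\,dx+\int\partial_x\varphi\,\mathcal H(u)\,dx,
\eeq
with $\mathcal H'(s)=2sf'(s)$, so $\mathcal H(s)=\tfrac43 s^3+\mathcal H_2(s)$ and $|\mathcal H_2(s)|\lesssim s^4$ near $0$. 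The logarithmic correction in $\lambda(t)$ is precisely what makes $\int_2^{\infty}\!\!\int |\partial_t\varphi|\,u^2$ and $\int_2^{\infty}\!\!\int|\partial_x^3\varphi|\,u^2$ finite when only $\|u(t)\|_{L^1}\lesssim 1$ is available; the coercive middle term then gives the $(\partial_x u)^2$-contribution to \eqref{Integra50}, while a companion virial with a different weight (in the spirit of a Martel--Merle double virial) and one virial applied to $\partial_x u$ after differentiating \eqref{gKdV} in $x$ recover the $u^2$- and $(\partial_x^2 u)^2$-pieces. The genuinely new flux $\int\partial_x\varphi\,\mathcal H_2(u)\,dx$ is bounded by $C\sqrt\ve\int|\partial_x\varphi|\,u^2\,dx$ (H\"older combined with the $L^\infty$-control) and is absorbed by the coercive middle term exactly as the cubic contribution $\tfrac43\int\partial_x\varphi\,u^3\,dx$ is treated in Theorem \ref{TH1}.

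Integrating \eqref{V2-plan} in $t\in[2,T]$ and letting $T\to\infty$ delivers \eqref{Integra50}, and the pointwise-in-time decay \eqref{AS} follows by a standard compactness plus unique continuation argument: if $\|u(t_n)\|_{H^1(I(t_n))}\not\to 0$ along some sequence $t_n\to\infty$, the shifts $u(\cdot+t_n,\cdot)$ would subconverge to a nontrivial KdV-type solution $\tilde u$, compactly supported by the uniform $L^1$-bound, and satisfying $\int e^{-c_0|x|}(\tilde u^2+(\partial_x\tilde u)^2)\equiv 0$ on a full time interval by \eqref{Integra50}, hence $\tilde u\equiv 0$ by unique continuation, a contradiction. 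The final sentence of the theorem is then immediate: a small, uniformly localized (hence $L^1$) standing breather-like solution to any Gardner-like equation would satisfy all the hypotheses and would be trapped inside $I(t)$ for $t$ large, contradicting \eqref{AS}. The main obstacle I anticipate is tuning the scale $\lambda(t)$ and the profile $\psi$ so that every error term in \eqref{V2-plan} becomes integrable in time under the minimal assumption $u\in L^\infty_t L^1_x\cap L^\infty_t H^1_x$, without any weighted Sobolev control on the initial data; the logarithmic factor in \eqref{Interval} appears to be exactly what this minimality forces.
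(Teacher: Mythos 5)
There is a genuine gap: you start directly from the quadratic virial $\int \varphi\, u^2\,dx$ and claim its error terms are made time-integrable by the logarithmic correction in $\lambda(t)$ together with the $L^1$ bound, but this is not the case. With $\lambda(t)=t^{1/2}/\log t$ one has $\lambda'(t)/\lambda(t)\sim \tfrac1{2t}$, so the term $\int \partial_t\varphi\, u^2\,dx$ is of size $\tfrac{1}{t}\int_{|x|\lesssim\lambda(t)}u^2\,dx$, which is \emph{not} integrable on $[2,\infty)$ if all you know is the global $L^2$ (or $L^1\cap L^\infty$) bound; the same problem affects the nonlinear flux $\tfrac1{\lambda(t)}\int|\phi'|\,(|u|^3+|\mathcal H_2(u)|)\lesssim\tfrac{\ve}{\lambda(t)}\int|\phi'|u^2$, since a local $L^2$ norm of $u$ cannot be absorbed, even with an $\ve$ prefactor, into the coercive term $\tfrac{1}{\lambda(t)}\int|\phi'|(\partial_xu)^2$. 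The paper's essential first move, which your plan omits, is a \emph{linear} virial: the functional $\mathcal I(t)=\int\psi(x/\lambda(t))\,u\,dx$ of \eqref{III}, finite precisely because $u\in L^\infty_tL^1_x$. Because the nonlinearity is quadratic, the flux term in \eqref{dtI} is $\tfrac1{\lambda(t)}\int\psi'(x/\lambda)(u^2+f_2(u))\geq\tfrac1{2\lambda(t)}\int\psi'(x/\lambda)u^2$ under \eqref{Smallness}, i.e. it is itself the coercive local $L^2$ quantity, while the dangerous term $\tfrac{\lambda'}{\lambda}\int\tfrac{x}{\lambda}\psi' u$ is only \emph{linear} in $u$ and can be split by Young's inequality into an integrable piece $C(\lambda'(t))^2\sim (t\log^2t)^{-1}$ plus a small multiple of the coercive term. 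This yields \eqref{Integra0}, and only then do the quadratic and energy virials close, since their non-coercive terms are now bounded by quantities already known to be integrable in time. This is exactly the ``subcritical $L^1$'' mechanism the paper highlights, and it is the step your absorption argument silently assumes.

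A secondary issue is your endgame. The paper deduces \eqref{AS} not by compactness and unique continuation but by showing that $\tfrac{d}{dt}\mathcal J$ and $\tfrac{d}{dt}\mathcal K$ are absolutely integrable in time (using the three integrability estimates \eqref{Integra0}, \eqref{Integra4}, \eqref{Integra5}), so that $\mathcal J(t)$ and $\mathcal K(t)$ are Cauchy as $t\to\infty$ and converge to the value $0$ attained along the subsequences $t_n,s_n$ produced by the mean-value argument; smallness then converts the vanishing of $\mathcal K$ into vanishing of the local $H^1$ norm. Your alternative via a limiting profile $\tilde u$ is not justified as written: the uniform $L^1$ bound does not make $\tilde u$ compactly supported, and the precompactness of the translates $u(\cdot+t_n)$ is not available under the stated hypotheses. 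I would recommend restructuring the argument around the hierarchy $\mathcal I\to\mathcal J\to\mathcal K$ of \eqref{III}, \eqref{J}, \eqref{K}, in that order.
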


\begin{rem}

Theorem \ref{TH2} gives another proof of the result in Theorem \ref{theo:2} for the case $k=2$ with a more precise information on the asymptotic behavior of the global small solutions. 

\end{rem}

\begin{rem}
Note that condition \eqref{Smallness} is in certain sense necessary, because large solutions $u(t)$ formally follow a dynamics in which $f_2(u(t))$ is larger than $u^2(t)$, and a completely different dynamics could appear from these assumptions; in particular, the main term could be of cubic order, just like mKdV, which has breather solutions. 
\end{rem}

\begin{rem}\label{Gardner_remark}
Theorem \ref{TH2} may be in contradiction with the existence of \emph{Gardner breathers}. However, under the hypothesis \eqref{Smallness} of Theorem \ref{TH2}, this is not the case.
Being in \eqref{BGe}-\eqref{BGe2} $\mu>0$ a fixed parameter, small energy breathers must have $\bt$ small (cf. \cite[Lemmas 2.2 and 2.6]{Alejo}), and standing breathers must obey the zero speed condition $\ga=0$, which implies $\al$ also small. Under these two conditions, $\Delta=\al^2+\bt^2-\frac2{9\mu}$ cannot be positive and breathers are not well-defined. Note that $\ga\neq 0$ implies that breathers leave the region $|x| \sim t^{1/2}$ very fast and therefore Theorem \ref{TH2} can be understood as a sort of ``nonlinear scattering'' in Theorem \ref{TH2}. See e.g. \cite{AMV,Alejo} for the role of the Gardner equation in deciding several long-time properties and behavior of the original KdV equation.
\end{rem}

 The proof of Theorems \ref{TH1} and \ref{TH2} are by itself elementary, and no extensive nor deep Fourier analysis are required. We prove this result by following very recent developments concerning the decay of solutions in  $1+1$ dimensional scalar field models. Kowalczyk, Martel and the author of this note showed in \cite{KMM,KMM1} that well chosen Virial functionals can describe in great generality the decay mechanism for models where standard scattering is not available (i.e. there is modified scattering), either because the dimension is too small, or the nonlinearity is long range. Moreover, this decay mechanism also describes \lq\lq nonlinear scattering'', in the sense that solutions like \eqref{Soliton} are also discarded by Theorem \ref{TH1} inside $I(t)$, $t\to+\infty$. We will also use as advantage the \emph{subcritical} character of the $L^1$ norm for KdV and its perturbations. This give us control on the $L^2$ norm of the solution before arriving to the standard virial identity in the energy space, which is very hard to control by itself without a previous control on the local $L^2$ norm.

\medskip

  Previous Virial-type decay estimates 
were obtained by Martel-Merle and Merle-Rapha\"el \cite{MM,MR} in the case of generalized KdV and nonlinear Schr\"odinger equations. Moreover, 
the results proved in this note and in \cite{KMM,KMM1,MM,MR} apply to equations which have long range nonlinearities, as well as very low or null 
decay rates. See also \cite{MPP,KMPP} for other applications of this technique to the case of Boussinesq equations, and \cite{KM1} for an application to the case of the BBM equation. 
\medskip

The rest of this paper is organized as follows: The proof of Theorems \ref{theo:1}-\ref{theo:2} will be given in section 2. Theorems \ref{theo:3}-\ref{theo:4} will be proven in section 3.
Section 4 contains the proofs of Theorems \ref{TH1} and \ref{TH2} which is divided in three  steps.
\medskip

\subsection*{Acknowledgments} We are indebted to M.A. Alejo for several interesting comments and remarks about a first version of this work.

\bigskip

\section{Proof of Theorems \ref{theo:1} and  \ref{theo:2}. }

The proof of Theorems \ref{theo:1} and \ref{theo:2} are direct consequences of the following result:
\begin{prop} \label{prop:1} 
If
\begin{equation}
\label{class1}
u\in C(\R:H^{4,2}(\R)) 
\end{equation}
is a strong solution of the IVP \eqref{gKdV}, then
\begin{equation}
\label{con1}
\frac{d}{dt}\int_{-\infty}^{\infty}xu(t,x)dx=\int_{-\infty}^{\infty}f(u(t,x))dx.
\end{equation}
\end{prop}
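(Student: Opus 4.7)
The plan is a fairly direct computation once the integral $\int x u\, dx$ is shown to be well-defined and time-differentiable under the hypothesis $u\in C(\R:H^{4,2}(\R))$. First I would check that $xu(t,\cdot)\in L^1(\R)$ for every $t$: splitting $\int|xu|\,dx$ at $|x|=1$, the inner piece is controlled by Cauchy--Schwarz via $\|u\|_{L^2}$, while the outer piece is bounded by $\bigl(\int_{|x|\ge 1}|x|^{-2}dx\bigr)^{1/2}\|x^2 u\|_{L^2}$, both of which are finite. The same style of estimate shows $f(u(t,\cdot))\in L^1(\R)$: since $u\in H^4\hookrightarrow L^\infty\cap L^2$ and $f(s)=s^k+f_k(s)$ with $k\ge 2$, one has $|f(u)|\lesssim |u|^2$ times bounded factors, so $f(u)\in L^1$. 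Continuity in $t$ of both $\int xu\,dx$ and $\int f(u)\,dx$ follows from continuity of $u$ in $H^{4,2}$.

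The main step is to bypass the issue that $x$ is unbounded by introducing a family of compactly supported truncations. Fix $\chi\in C_c^\infty(\R)$ with $\chi\equiv 1$ on $[-1,1]$ and $\supp\chi\subset[-2,2]$, and set $\phi_R(x):=x\,\chi(x/R)$. Define $F_R(t):=\int \phi_R(x)\, u(t,x)\, dx$. Since $\phi_R\in C_c^\infty$, all integrations by parts are justified, and using $\partial_t u=-\partial_x^3 u-\partial_x f(u)$ together with three, respectively one, integration by parts, one finds
\begin{equation*}
F_R'(t)=\int \phi_R'''(x)\, u(t,x)\,dx+\int \phi_R'(x)\, f(u(t,x))\,dx.
\end{equation*}
A direct computation gives $|\phi_R'''(x)|\lesssim R^{-2}\mathbf{1}_{R\le|x|\le 2R}$, and $\phi_R'(x)=\chi(x/R)+(x/R)\chi'(x/R)$ is uniformly bounded and converges pointwise to $1$.

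Next I would pass to the limit $R\to\infty$. For the remainder, Cauchy--Schwarz with the weight gives
\begin{equation*}
\Bigl|\int\phi_R''' u\,dx\Bigr|\lesssim R^{-2}\!\!\int_{R\le|x|\le 2R}\!\!|u|\,dx
\lesssim R^{-2}\!\cdot\! R^{-3/2}\Bigl(\int_{|x|\ge R}\!\!|x|^{4}u^2\,dx\Bigr)^{1/2}\!\!\xrightarrow[R\to\infty]{}0,
\end{equation*}
uniformly in $t$ on compacts using the continuity of $t\mapsto u(t)$ in $H^{4,2}$. For the nonlinear term, dominated convergence (with dominating function $C|f(u)|\in L^1$) shows $\int\phi_R' f(u)\,dx\to\int f(u)\,dx$. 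Similarly $F_R(t)\to \int xu(t)\,dx$ by dominated convergence with the integrable majorant $|xu|$. Integrating $F_R'$ in time, passing to the limit on both sides, and using the continuity of $t\mapsto\int f(u)\,dx$, one obtains
\begin{equation*}
\int xu(t)\,dx-\int xu(0)\,dx=\int_0^t\!\int f(u(s,x))\,dx\,ds,
\end{equation*}
which, differentiated in $t$, yields \eqref{con1}.

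The main obstacle is exactly the control of the error term $\int\phi_R''' u\,dx$: formally it corresponds to the boundary contribution $[x\partial_x^2 u]_{-\infty}^{\infty}$ from integrating $\int x\partial_x^3 u\,dx$ by parts, which is not obviously zero from $u\in H^4$ alone. The weight assumption $x^2 u\in L^2$ is precisely what makes this term vanish as $R\to\infty$, and is the reason the hypothesis $u\in H^{4,2}$ (rather than merely $H^4$) appears. Everything else is soft analysis (dominated convergence plus the fundamental theorem of calculus) once this error is controlled.
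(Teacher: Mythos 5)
Your argument is correct and is essentially a careful justification of the paper's one-line proof (multiply the equation by $x$, integrate over $\R$, and integrate by parts); the cutoff $\phi_R(x)=x\chi(x/R)$ and the limit $R\to\infty$ simply make that formal computation rigorous. One small correction to your closing remark: the error term $\int \phi_R'''\,u\,dx$ already vanishes using only $u\in L^2$, since by Cauchy--Schwarz it is $O(R^{-2}\cdot R^{1/2}\|u\|_{L^2})=O(R^{-3/2})$, so the weighted hypothesis $x^2u\in L^2$ is really what makes $\int xu\,dx$ finite (and justifies the dominated convergence $F_R(t)\to\int xu\,dx$), not what kills the boundary contribution.
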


\begin{proof} 
Multiplying the equation in \eqref{gKdV} by $x$ and integrating the result in $\R$ one gets \eqref{con1}.
\end{proof}

\vskip.1in

\begin{proof}[Proof of Theorem \ref{theo:1}]

If $u=u(x,t)$ is time periodic with period $\omega$ integrating \eqref{con1} one obtains \eqref{A1}.

Inserting the assumption \eqref{a1} into \eqref{con1} one gets that
\begin{equation}
\label{d1}
\Omega(t)=\int_{-\infty}^{\infty}xu(t,x)dx
\end{equation}
is strictly monotonic so it cannot be periodic. This yields the desired result.
\end{proof} 

\vskip.1in

\begin{proof}[Proof of Theorem \ref{theo:2}]

The hypotheses \eqref{a2} and \eqref{a3} guarantee that \
\[
\,f(u(t,x))\geq 0,\;\;\;\;\;(t,x)\in\R^2. 
\]
Hence, the argument in the proof of Theorem \ref{theo:1} completes the proof.
\end{proof} 

\medskip

\section{Proof of Theorems \ref{theo:3} and \ref{theo:4}. }

The proof of Theorems \ref{theo:3} and \ref{theo:4} are direct consequences of the following result:
\begin{prop} \label{prop:2} 
If
\begin{equation}
\label{classs1}
u\in C(\R:H^{1,1/2}(\R)) 
\end{equation}
is a strong solution of the IVP \eqref{gKdV}, then
\begin{equation}
\label{con2}
\frac{d}{dt}\int_{-\infty}^{\infty}xu^2(t,x)dx=-6\left(I_3(u_0)+\int_{-\infty}^{\infty}\Big(G(u)-\frac{F(u)}{3}\,\Big)(t,x)dx\right),
\end{equation}
where $G$ is as in \eqref{cl} and 
\begin{equation}
\label{a4}
\partial_xF(u):=u\partial_xf(u),\;\;\;\;\;\;F(0)=0.
\end{equation}
\end{prop}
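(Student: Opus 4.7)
The plan is to derive Proposition~\ref{prop:2} by directly differentiating $\Lambda(t)=\int x u^2(t,x)\,dx$ under the integral sign, substituting the equation for $\partial_t u$, and carrying out integration by parts, reorganizing the final expression with the help of the conservation law $I_3$.

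First I would compute, at least formally for smooth rapidly decaying data,
\[
\frac{d}{dt}\int x u^2\,dx = 2\int x u \partial_t u\,dx = -2\int x u \partial_x^3 u\,dx - 2\int x u \partial_x f(u)\,dx.
\]
For the dispersive term, two integration by parts (pushing derivatives from $\partial_x^3 u$ onto $xu$) give
\[
-2\int x u \partial_x^3 u\,dx = 2\int \partial_x(xu)\partial_x^2 u\,dx = 2\int u\partial_x^2 u\,dx + \int x \partial_x((\partial_x u)^2)\,dx = -3\int(\partial_x u)^2\,dx,
\]
after one more integration by parts in the last term and using $\int u\partial_x^2 u = -\int(\partial_x u)^2$. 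For the nonlinear term, the definition $\partial_x F(u) = u\partial_x f(u)$ and $F(0)=0$ yield
\[
-2\int x u\partial_x f(u)\,dx = -2\int x\partial_x F(u)\,dx = 2\int F(u)\,dx,
\]
provided the boundary contribution vanishes. Combining these identities gives
\[
\frac{d}{dt}\int x u^2\,dx = -3\int (\partial_x u)^2\,dx + 2\int F(u)\,dx.
\]
To recover the form in \eqref{con2}, I would use $I_3(u)=\int(\tfrac12(\partial_x u)^2 - G(u))\,dx$ and the conservation $I_3(u(t))=I_3(u_0)$, which gives $-3\int(\partial_x u)^2\,dx = -6 I_3(u_0) - 6\int G(u)\,dx$. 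A simple algebraic rearrangement matches exactly the right-hand side of \eqref{con2}.

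The main obstacle is justifying these manipulations in the low regularity class $H^{1,1/2}(\R)$, since the computation formally involves $\partial_x^3 u$ and $\partial_x^2 u$ which are not pointwise defined on this space. The standard workaround, which I would follow, is approximation: solve the equation with smoothed and localized initial data $u_0^{(n)}\in H^{\infty}\cap \mathcal S(\R)$ (by cut-off and mollification) obtaining regular solutions $u^{(n)}$ for which the above calculation is rigorous, and for which the boundary terms at $x=\pm\infty$ in the integrations by parts vanish due to the rapid decay. Then I would pass to the limit in each term on both sides using continuous dependence of the flow of \eqref{gKdV} on initial data in $H^{1,1/2}(\R)$ (which is the content of the weighted well-posedness results cited from \cite{ILP,Ka}); the key quantities $\int xu^2$, $\int (\partial_x u)^2$, $\int G(u)$, and $\int F(u)$ are continuous with respect to the $H^{1,1/2}$ topology, given the smallness/structure assumptions that appear in Theorems~\ref{theo:3}--\ref{theo:4}. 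The integrability of $xF(u)$ and $xG(u)$ needed for the boundary terms follows from $F(u),G(u)=O(u^{j+1})$ combined with $u\in H^{1,1/2}$ via interpolation between $H^1$ and $L^2(|x|\,dx)$. Once this approximation argument is in place, passing to the limit yields \eqref{con2} for every $u$ in the stated class.
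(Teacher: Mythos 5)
Your proposal is correct and follows essentially the same route as the paper, which simply multiplies the equation by $xu$, integrates over $\R$, and invokes the conservation of $I_3$; your computation of the dispersive term ($-3\int(\partial_x u)^2$), the nonlinear term ($2\int F(u)$ via the definition of $F$), and the final rearrangement using $I_3(u)=I_3(u_0)$ reproduces \eqref{con2} exactly. The additional density/approximation discussion justifying the manipulations in $H^{1,1/2}(\R)$ is a reasonable elaboration of details the paper leaves implicit.
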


\begin{proof} 
Multiplying the equation in \eqref{gKdV} by $xu(t,x) $, integrating the result in $\R$ and using the conservation law  $I_3(u)$ in \eqref{CL} one obtains \eqref{con2}.
\end{proof}

\begin{rem}
We observe that in the case of the mKdV, i.e. $f(u)=u^{3}$ in \eqref{gKdV} one has from \eqref{con2}
\[
\frac{d}{dt}\int_{-\infty}^{\infty}xu^2(t,x)dx=-6\,I_3(u_0).
\]

Therefore, for the periodic breather solution $B=B(t,x)$ of the mKdV, i.e. $\gamma=0$ in  \eqref{breather}, one has  that
\[
I_3(B)=0.
\]
\end{rem}

\begin{proof}[Proof of Theorem \ref{theo:3}]

First, we recall  a particular case of the Gagliardo-Nirenberg inequality : for any $\,p\in[2,\infty]$
\begin{equation}
\label{GN}
\|u(t) \|_{p}\leq c_p\,\|\partial_xu(t)\|_2^{1/2-1/p}\,\|u(t)\|_2^{1/2+1/p}.
\end{equation}

\vskip.05in

Combining the assumptions \eqref{a5}-\eqref{a6}  and \eqref{GN} one gets that
\begin{equation}
\label{d4}
\begin{aligned}
&I_3(u_0)+\int_{-\infty}^{\infty}\Big(G(u)-\frac{F(u)}{3}\,\Big)(t,x)dx\\
& \geq \frac12 \|\partial_xu(t)\|_2^2 -c\|u_0\|_2^4\,\|\partial_xu(t)\|_2^2\geq  \frac14\|\partial_xu(t)\|_2^2.
\end{aligned}
\end{equation}

Therefore, inserting \eqref{d4} in \eqref{con2} it follows that
\begin{equation}
\label{d3}
\Lambda(t)=\int_{-\infty}^{\infty}xu^2(t,x)dx
\end{equation}
is strictly decreasing so it cannot be periodic.
\end{proof} 

\vskip.1in

\begin{proof}[Proof of Theorem \ref{theo:4}]

Since 
\begin{equation}
\label{a7b}
f(u)=u^{3}+\beta u^5+f_{5}(u),  \;\;\;\;\beta<0,
\end{equation}
we have that
\begin{equation}
\label{d5}
\begin{aligned}
\begin{cases}
&F(u) =\frac{3}{4}\,u^4+\frac{5\beta}{6}\,u^6+F_{f_5},\\
\\
&G(u)=u^4+\frac{\beta}{6}\,u+G_{f_5},
\end{cases}
\end{aligned}
\end{equation}
with

\begin{equation}
\label{d6}
\begin{aligned}
\begin{cases}
&\partial_xF_{f_5}(u) =u\,\partial_xf_5(u),\;\;\;\;\;F_{f_5}(0)=0,\\
\\
&G_{f_5}(u)=\int_0^uf_5(s)ds.
\end{cases}
\end{aligned}
\end{equation}

Therefore,
\begin{equation}
\label{d7}
G(u)-\frac{1}{3}F(u)=\frac{\beta}{6} u^6\Big(1-\frac{5}{3}\Big)+G_{f_5}(u)-\frac{1}{3}F_{f_5}(u),
\end{equation}
with
\begin{equation}
\label{d8}
|G_{f_5}(s)|+|F_{f_5}(s)|=o(|s|^6)\;\;\;\;\;\;\text{as}\;\;\;\;\;|s|\to 0.
\end{equation}

Combining \eqref{d7}, \eqref{d8} and the hypotheses \eqref{a7}-\eqref{a8} from \eqref{con2}
one conclude that
\begin{equation}
\label{d9}
\frac{d}{dt}\int_{-\infty}^{\infty}xu^2(t,x)dx \lneq 0,
\end{equation}
which yields the desired result.

\end{proof}

\medskip

\section{Proof of Theorems \ref{TH1} and \ref{TH2}}

\noindent
\underline{ Step 1:} Integrability of the local $L^2$ norm.
\medskip

\noindent
With no loss of generality, we assume now that $t\geq 2$ and $C=1$ in \eqref{Interval}. Let
\be\label{la}
\la(t) := \frac{t^{1/2}}{\log t}.
\ee
Note that
\[
\la'(t) = \frac1{2t^{1/2} \log t} -\frac1{t^{1/2} \log^2 t} = \frac1{t^{1/2} \log t} \left(\frac12 - \frac1{\log t}\right),
\]
and
\be\label{Computations}
\frac{\la'(t)}{\la(t)} = \frac{1}{t}\left( \frac12 - \frac1{\log t} \right), \quad \la'^2(t) =  \frac{1}{t \log^2 t}\left(\frac12 - \frac 1{\log t} \right)^2.
\ee
Let $\psi$ be a smooth bounded weight. Let us consider the functional for $u$ solving \eqref{gKdV}
\be\label{III}
\mathcal I(t):=\int_{-\infty}^{\infty} \psi \Big( \frac{x}{\la(t)} \Big)  u(t,x)dx.
\ee
Notice that from the hypothesis of Theorems \ref{TH1}-\ref{TH2}, one has  $\sup_{t\in \R}\mathcal I(t) <+\infty$. We claim the following result, whose proof is direct. 

\begin{lem}\label{dtI0}
We have
\be\label{dtI}
\begin{aligned}
\frac d{dt}\mathcal I(t) = &~ {} - \frac{\la'(t)}{\la(t)} \int \frac{x}{\la(t)}  \psi'\Big( \frac{x}{\la(t)} \Big) u(t,x)dx 
+ \frac1{\la^3(t)}\int \psi^{(3)}\Big( \frac{x}{\la(t)} \Big) u(t,x)dx \\
& ~ {} + \frac{1}{\la(t)} \int \psi ' \Big( \frac{x}{\la(t)} \Big) (u^2 + f_2(u))(t,x)dx.
\end{aligned}
\ee
\end{lem}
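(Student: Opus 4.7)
The identity \eqref{dtI} follows from a direct differentiation of $\mathcal I(t)$ under the integral sign together with the equation \eqref{gKdV} rewritten as $\partial_t u = -\partial_x^3 u - \partial_x(u^2+f_2(u))$, using that $f(u)=u^2+f_2(u)$ throughout Theorems \ref{TH1}--\ref{TH2}. The time derivative splits naturally into a ``moving weight'' piece coming from differentiating $\psi(x/\la(t))$ in $t$, and an ``equation'' piece coming from substituting $\partial_t u$.

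For the moving weight piece, the chain rule gives
$$\partial_t \psi\!\left(\frac{x}{\la(t)}\right) = -\frac{x\,\la'(t)}{\la^2(t)}\,\psi'\!\left(\frac{x}{\la(t)}\right),$$
and after factoring out $\la'(t)/\la(t)$ one recovers the first term on the right-hand side of \eqref{dtI}. For the equation piece I would treat the Airy and nonlinear contributions separately. The Airy contribution $-\int \psi(x/\la(t))\,\partial_x^3 u\,dx$ is handled by three successive integrations by parts; since each derivative landing on $\psi(x/\la(t))$ produces a factor $1/\la(t)$, the outcome is $\la^{-3}(t)\int \psi^{(3)}(x/\la(t))\,u\,dx$, which is the second term. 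Similarly, the nonlinear contribution $-\int \psi(x/\la(t))\,\partial_x(u^2+f_2(u))\,dx$ requires only a single integration by parts and yields $\la^{-1}(t)\int \psi'(x/\la(t))(u^2+f_2(u))\,dx$, matching the third term. Summing the three pieces gives \eqref{dtI}.

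The only mild issue is to justify the integration by parts and the exchange of $d/dt$ with the integral. Since $\psi$ and its derivatives are bounded while $u(t)\in L^1(\R)\cap H^1(\R)$ uniformly in $t$, the integrals $\int \psi^{(3)}(x/\la(t))\,u\,dx$ and $\int \psi'(x/\la(t))(u^2+f_2(u))\,dx$ are absolutely convergent with uniform-in-$t$ bounds; the boundary terms at $\pm \infty$ involving $\partial_x u$ and $\partial_x^2 u$ generated during the triple integration by parts are disposed of by a standard density argument, performing the computation first on smooth, rapidly decaying data furnished by the local well-posedness theory of \eqref{gKdV} and then passing to the limit. I do not expect a real obstacle here; Lemma \ref{dtI0} is a preparatory identity, and the substantive difficulty will be the subsequent choice of $\psi$ (nonnegative, with $\psi'$ yielding coercive local $L^2$ mass) making the moving-weight and dispersive terms integrable in time against the coercive nonlinear contribution.
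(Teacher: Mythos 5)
Your computation is correct and is exactly the ``direct'' proof the paper has in mind (the paper omits it, stating only that the proof is direct): differentiate under the integral, use $\partial_t u = -\partial_x^3 u - \partial_x(u^2+f_2(u))$, and integrate by parts three times for the Airy term and once for the nonlinear term, with signs and powers of $\la(t)$ matching \eqref{dtI}. The regularization/density remark to justify the integrations by parts for $H^1\cap L^1$ solutions is the standard and appropriate way to close the argument.
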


The main result of this section is an averaged control of the local $L^2$ norm of the solution at infinity in time.

\begin{cor}\label{Integra2}
Consider $\psi(x) := \tanh x$, such that $\psi'(x) = \sech^2 x$. Assume that $k=2$ and either $f_2(s) =0$, or $f_2(s)\neq 0$ and condition \eqref{Smallness} holds. Then we have
\be\label{Integra0}
\int_2^\infty\frac1{\la(t)} \int_{-\infty}^{\infty} \sech^2 \Big( \frac{x}{\la(t)} \Big)  u^2(t,x)dx dt < +\infty,
\ee
and for some increasing  sequence of time $t_n\to +\infty$,
\be\label{Integra1}
\lim_{n\to \infty} \int_{-\infty}^{\infty} \sech^2 \Big( \frac{x}{\la(t_n)} \Big) u^2(t_n,x)dx =0.
\ee
\end{cor}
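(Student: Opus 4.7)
The plan is to apply Lemma \ref{dtI0} with $\psi = \tanh$, so $\psi' = \sech^2 \geq 0$, and integrate \eqref{dtI} in time from $2$ to $T$. Set
$$J(t) := \frac{1}{\la(t)} \int_{-\infty}^{\infty} \sech^2\!\Big(\frac{x}{\la(t)}\Big) u^2(t,x)\,dx,$$
which is the quantity we want to show is integrable on $[2,\infty)$. Since $\psi$ is bounded and $u \in L^\infty(\R; L^1(\R))$, $\mathcal I(t)$ is uniformly bounded in $t$, so the time integral of $\frac{d}{dt}\mathcal I(t)$ is controlled. The idea is then to extract $J(t)$ as the leading positive term on the right-hand side of \eqref{dtI} and to bound the remaining contributions either as time-integrable errors or by absorption back into $J(t)$.

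For the third term of \eqref{dtI}, the Sobolev embedding $H^1\hookrightarrow L^\infty$ combined with \eqref{Smallness} and the condition $f_2(s) = o(s^2)$ as $s\to 0$ yields $|f_2(u(t,x))| \leq C\varepsilon\, u^2(t,x)$, so
$\frac{1}{\la}\int \psi'(x/\la)(u^2 + f_2(u))\,dx \geq (1 - C\varepsilon)J(t)$.
For the second term, since $\psi''' \in L^\infty$, one has $|\frac{1}{\la^3}\int\psi'''(x/\la)u\,dx| \leq \frac{C}{\la^3(t)}\|u(t)\|_{L^1} \lesssim \frac{\log^3 t}{t^{3/2}}$, which is integrable on $[2,\infty)$. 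The first term is the delicate one: a naive bound using $|(x/\la)\sech^2(x/\la)|_\infty \leq C$ gives only $O(1/t)$, which is \emph{not} integrable. The crucial step is weighted Cauchy--Schwarz,
$$\left|\int \tfrac{x}{\la}\psi'(x/\la)\, u\,dx\right| \leq \left(\int \tfrac{x^2}{\la^2}\sech^2(x/\la)\, dx\right)^{1/2}\left(\int \sech^2(x/\la)\, u^2 dx\right)^{1/2} = C\,\la(t)\, J(t)^{1/2},$$
so that the first term of \eqref{dtI} is bounded by $C\,\la'(t)\, J(t)^{1/2} \leq \frac{1}{4}J(t) + C(\la'(t))^2$ by AM-GM; from \eqref{Computations}, $(\la'(t))^2 \sim (t\log^2 t)^{-1}$ is integrable on $[2,\infty)$.

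Combining these bounds, for $\varepsilon$ sufficiently small we obtain
$\tfrac{1}{2}J(t) \leq \frac{d}{dt}\mathcal I(t) + C(\la'(t))^2 + C\la^{-3}(t)$,
and integrating over $[2,T]$ yields $\int_2^\infty J(t)\,dt < \infty$, which is \eqref{Integra0}. For \eqref{Integra1}, observe that $\int \sech^2(x/\la(t))u^2 dx = \la(t) J(t)$; if $\liminf_{t\to\infty} \la(t)J(t) > 0$, then $J(t) \gtrsim \la(t)^{-1} \sim \log t / \sqrt{t}$ for large $t$, contradicting $J \in L^1([2,\infty))$, so there must exist $t_n\to\infty$ with $\la(t_n)J(t_n)\to 0$. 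The main obstacle is precisely the first term of \eqref{dtI}, and the Cauchy--Schwarz plus AM-GM absorption is what makes the argument close; it also explains the choice $\la(t) = \sqrt t/\log t$, which is essentially dictated by the requirement $(\la')^2 \in L^1([2,\infty))$.
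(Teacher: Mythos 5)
Your proposal is correct and follows essentially the same route as the paper: the same functional $\mathcal I$, the same absorption of the cross term $\frac{\la'}{\la}\int \frac{x}{\la}\psi'(\frac{x}{\la})u$ via weighted Cauchy--Schwarz/AM-GM into $\frac14 J(t)+C(\la'(t))^2$, the same smallness argument $|f_2(u)|\le C\ve u^2$ for the nonlinear term, and the same pigeonhole based on the non-integrability of $1/\la(t)$ for \eqref{Integra1}. The only (immaterial) deviation is that you bound the $\psi^{(3)}$ term by $\la^{-3}\|u\|_{L^1}\sim \log^3 t/t^{3/2}$ whereas the paper uses Cauchy--Schwarz in $L^2$ to get $\log^{5/2}t/t^{5/4}$; both are integrable, and your version of the final inequality is in fact slightly more careful in explicitly retaining the $\frac{d}{dt}\mathcal I(t)$ term.
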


\begin{proof}
Let us estimate the terms in \eqref{dtI}. With the choice of $\la(t)$ given in \eqref{la}, we have
\[
\begin{aligned}
\abs{ \frac{\la'(t)}{\la(t)} \int \frac{x}{\la(t)} \psi'\Big( \frac{x}{\la(t)} \Big) u(t,x)dx} \le &~  \frac{(\la'(t))^2}{\la(t)} \int \Big(\frac{x}{\la(t)}\Big)^2 \psi'\Big( \frac{x}{\la(t)} \Big) dx \\
& ~ +\frac{\la'(t)}{4\la(t)} \frac1{\la'(t)} \int  \psi'\Big( \frac{x}{\la(t)} \Big) u^2(t,x)dx.
\end{aligned}
\]
Therefore
\be\label{Est1}
\abs{ \frac{\la'(t)}{\la(t)} \int \frac{x}{\la(t)} \psi'\Big( \frac{x}{\la(t)} \Big) u(t,x)dx} \le  C(\la'(t))^2 +\frac{1}{4\la(t)}\int  \psi'\Big( \frac{x}{\la(t)} \Big) u^2(t,x)dx.
\ee
On the other hand, using \eqref{la}
\be\label{Est2}
\begin{aligned}
 \frac1{\la^3(t)}\int \psi^{(3)}\Big( \frac{x}{\la(t)} \Big) u(t,x)dx  \lesssim  \frac1{\la^3(t)}  \la^{1/2}(t) \|u(t)\|_{L^2} 
 \lesssim     \frac{\log^{5/2} t}{t^{5/4}} .
\end{aligned} 
\ee
Hence, in the case where $f_2(s) =0$, we conclude from \eqref{dtI}, \eqref{Est1} and \eqref{Est2} that 
\[
\begin{aligned}
 \frac{1}{\la(t)} \int \psi ' \Big( \frac{x}{\la(t)} \Big) u^2(t,x)dx  \leq &~ \frac{C\log^{5/2} t}{t^{5/4}}+ C(\la'(t))^2 \\
 & \quad ~ {} +\frac{1}{4\la(t)}\int  \psi'\Big( \frac{x}{\la(t)} \Big) u^2(t,x)dx.
\end{aligned}
\]
Using that $\psi '=\sech^2$ and \eqref{Computations}, we get
\[
 \frac{1}{\la(t)} \int\sech^2 \Big( \frac{x}{\la(t)} \Big) u^2(t,x)dx  \lesssim  \frac{\log^{5/2} t}{t^{5/4}} +  \frac{1}{t \log^2 t},
\]
an estimate that shows \eqref{Integra0} for this case. For the case $f_2(s) \neq 0$, we use \eqref{f(s)}  to bound 
\[
s^2 + f_2(s) \geq \frac12 s^2,
\]
valid for all $|s|$ small. Consequently, from \eqref{Smallness},
\[
 \frac{1}{\la(t)} \int \psi ' \Big( \frac{x}{\la(t)} \Big) (u^2 + f_2(u))(t,x)dx \geq  \frac{1}{2\la(t)} \int \psi ' \Big( \frac{x}{\la(t)} \Big) u^2 (t,x)dx,
\]
and the rest of the proof is the same as in the $f_1(s)=0$ case. Finally, \eqref{Integra1} is just consequence of \eqref{Integra0} and the fact that $\frac1{\la(t)}$ is not integrable in $[2,\infty)$.
\end{proof}

\noindent
\underline{ Step 2:} Averaged local $L^2$ decay of the derivatives.

\medskip
\noindent
Let $\phi$ be a smooth, bounded function, to be defined later. Let us define the functional
\be\label{J}
\mathcal J(t):= \frac12\int_{-\infty}^{\infty} \phi\Big( \frac{x}{\la(t)} \Big) u^2(t,x)dx.
\ee
Notice that from the hypothesis of Theorems \ref{TH1} and \ref{TH2}, one has  $\sup_{t\in \R}\mathcal J(t) <+\infty$. We claim the following result.

\begin{lem}\label{dtJ0}
Let $F_2:\R\to \R$ be such that $F_2(0)=0$ and $F_2'(s) =f_2(s)$ (see \eqref{f(s)}). Then we have
\be\label{dtJ}
\begin{aligned}
\frac d{dt}\mathcal J(t) = &~ {} - \frac{\la'(t)}{2\la(t)} \int \frac{x}{\la(t)} \phi'\Big( \frac{x}{\la(t)} \Big) u^2(t,x)dx -\frac3{2\la(t)} \int \phi'\Big( \frac{x}{\la(t)} \Big) (\partial_x u)^2(t,x)dx \\
& ~ {}+ \frac1{2\la^3(t)}\int \phi^{(3)}\Big( \frac{x}{\la(t)} \Big) u^2(t,x)dx \\
& ~ {}+ \frac{1}{\la(t)} \int \phi ' \Big( \frac{x}{\la(t)} \Big) \Big( \frac23u^{3} + uf_2(u) - F_2(u)\Big)(t,x)dx.
\end{aligned}
\ee
\end{lem}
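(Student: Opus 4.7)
The lemma is a pure virial-type computation, analogous to but one level higher than Lemma~\ref{dtI0}. The plan is to differentiate $\mathcal J(t)$ under the integral sign, substitute the equation $\partial_t u = -\partial_x^3 u - \partial_x(u^2 + f_2(u))$ (recall $k=2$ in both theorems), and integrate by parts three times. Boundary terms vanish because the hypothesis $u(t)\in H^1(\R)$ combined with the standard persistence of local regularity for gKdV yields enough spatial decay at infinity for the required integrations.

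First, Leibniz gives
\[
\frac{d}{dt}\mathcal J(t) = \frac12\int \partial_t\Big[\phi\big(x/\la(t)\big)\Big] u^2 \, dx + \int \phi\big(x/\la(t)\big) u \, \partial_t u \, dx,
\]
and since $\partial_t[\phi(x/\la)] = -(\la'(t)/\la(t))\cdot (x/\la(t))\,\phi'(x/\la)$, the first summand is exactly the first term in \eqref{dtJ}. For the second summand I would substitute the equation and split into the linear piece $-\int \phi u\,\partial_x^3 u\,dx$ and two nonlinear pieces $-\int \phi u\,\partial_x(u^2)\,dx$ and $-\int \phi u\,\partial_x f_2(u)\,dx$.

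The key calculation is the linear one. Integrating by parts once gives
$-\int \phi u\,\partial_x^3 u\,dx = \int (\phi'/\la)\, u\,\partial_x^2 u\,dx + \int \phi (\partial_x u)(\partial_x^2 u)\,dx$. The second summand equals $-\frac{1}{2\la}\int \phi'(\partial_x u)^2\,dx$ via $\partial_x u \cdot \partial_x^2 u = \tfrac12\partial_x[(\partial_x u)^2]$. In the first summand I integrate by parts again to peel one derivative off $u$, producing $-\frac{1}{\la}\int \phi'(\partial_x u)^2\,dx$ together with a term $-\int (\phi''/\la^2) u\,\partial_x u\,dx$; rewriting the latter as $-\frac{1}{2\la^2}\int \phi''\,\partial_x(u^2)\,dx$ and integrating by parts once more gives $+\frac{1}{2\la^3}\int \phi''' u^2\,dx$. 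Summing, one obtains $\frac{1}{2\la^3}\int \phi''' u^2\,dx - \frac{3}{2\la}\int \phi' (\partial_x u)^2\,dx$, which matches the second and third terms in \eqref{dtJ}.

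For the nonlinear contributions, I would use $u\,\partial_x(u^2) = \tfrac{2}{3}\partial_x(u^3)$ and integrate by parts to get $-\int \phi u\,\partial_x(u^2)\,dx = \tfrac{2}{3\la}\int \phi'\,u^3\,dx$. For the $f_2$ piece, integration by parts yields $\tfrac1\la\int \phi'\,u f_2(u)\,dx + \int \phi(\partial_x u) f_2(u)\,dx$; since $F_2'=f_2$ with $F_2(0)=0$, the last integral equals $\int \phi\,\partial_x F_2(u)\,dx = -\tfrac1\la\int \phi'F_2(u)\,dx$. Combining assembles the fourth term of \eqref{dtJ} exactly. The only genuine care needed is to verify decay of $u$ and its derivatives at spatial infinity so every boundary term vanishes; this follows from the regularity/integrability assumptions \eqref{m1}--\eqref{m2} together with standard persistence properties of gKdV flows.
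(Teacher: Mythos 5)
Your computation is correct and is precisely the standard virial calculation that the paper invokes without writing out (its proof of this lemma consists of the single line ``standard, see \cite{MM2}''); each of the three pieces you compute --- the $\partial_t[\phi(x/\la)]$ term, the triple integration by parts on $-\int \phi\, u\,\partial_x^3 u$ yielding $\frac1{2\la^3}\int\phi'''u^2-\frac3{2\la}\int\phi'(\partial_xu)^2$, and the nonlinear terms via $u\,\partial_x(u^2)=\frac23\partial_x(u^3)$ and $f_2(u)\,\partial_xu=\partial_xF_2(u)$ --- matches \eqref{dtJ} exactly. The only caveat, which you already flag, is that the integrations by parts should strictly be performed for smooth, rapidly decaying solutions and extended by density/continuity, since $H^1$ regularity alone does not make $\partial_x^3u$ classical.
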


\begin{proof}
The proof of this result is standard, see e.g. \cite{MM2} for details.
\end{proof}

Collecting the results obtained in Corollary \ref{Integra2} and Lemma \ref{dtJ0}, we conclude that the local $L^2$ norm of the derivate of $u$ is integrable in time. 
\begin{cor}
Consider the weight function $\phi(x) := \tanh (2x)$, such that $\phi'(x) = 2\sech^2 (2x)$.  Then we have from \eqref{dtJ}
\be\label{Integra4}
\int_2^\infty\frac1{\la(t)} \int_{-\infty}^{\infty} \sech^4 \Big( \frac{x}{\la(t)} \Big) (\partial_x u)^2(t,x)dx dt <+\infty.
\ee
In particular, there exists an increasing sequence of times $s_n\to +\infty$ such that 
\be\label{AS1}
\lim_{n\to +\infty}\int_{-\infty}^{\infty} \sech^4 \Big( \frac{x}{\la(s_n)} \Big) (\partial_x u)^2(s_n,x)dx =0.
\ee
\end{cor}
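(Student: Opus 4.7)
My approach is to apply Lemma \ref{dtJ0} with the weight $\phi(x) := \tanh(2x)$ and integrate the identity \eqref{dtJ} over $t \in [2,T]$. Since $|\phi|\leq 1$, we have $|\mathcal J(t)| \leq \tfrac12 \sup_t\|u(t)\|_{L^2}^2 < \tfrac12\ve^2$ uniformly in $t$, so $\mathcal J(T) - \mathcal J(2)$ stays bounded as $T\to\infty$. Thus \eqref{Integra4} will follow once the three terms on the right-hand side of \eqref{dtJ} other than $-\frac{3}{2\la(t)}\int \phi'(x/\la) (\partial_x u)^2\,dx$ (the term with the ``good'' sign that we want to exploit) are shown to be absolutely integrable in time.

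For the term involving $\phi^{(3)}$, boundedness of $\phi^{(3)}$ and $\|u(t)\|_{L^2}^2 < \ve^2$ give
\[
\frac{1}{2\la^3(t)}\left|\int\phi^{(3)}(x/\la)\,u^2(t,x)\,dx\right| \leq \frac{C\ve^2}{\la^3(t)} \leq C\ve^2\,\frac{(\log t)^3}{t^{3/2}},
\]
which is integrable on $[2,\infty)$. For the nonlinear term, the expansion \eqref{f(s)} together with the $H^1$-smallness yield $|\tfrac{2}{3} u^3 + uf_2(u)-F_2(u)|\leq C\ve\, u^2$; combined with the pointwise comparison $\phi'(y)=2\sech^2(2y)\leq 2\sech^2(y)=2\psi'(y)$, this dominates the nonlinear term by $C\ve\,\frac{1}{\la(t)}\int\psi'(x/\la)\,u^2\,dx$, whose $t$-integral is finite by the local-$L^2$ bound \eqref{Integra0}. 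The first term is the most delicate, since $\la'(t)/\la(t)\sim 1/(2t)$ is not integrable on $[2,\infty)$; however, the elementary pointwise inequality $|y|\sech^2(2y)\leq C\sech^2(y)$ (valid since $|y|\cosh^2(y)/\cosh^2(2y)$ is continuous and decays like $|y|e^{-2|y|}$) yields
\[
\left|\frac{\la'(t)}{2\la(t)}\int\frac{x}{\la}\phi'(x/\la)\,u^2\,dx\right| \leq \frac{C\,|\la'(t)|}{\la(t)}\int\psi'(x/\la)\,u^2\,dx,
\]
and since $|\la'(t)|$ is bounded on $[2,\infty)$ by \eqref{Computations}, this is again absolutely integrable by \eqref{Integra0}.

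Combining these estimates and letting $T\to\infty$ gives \eqref{Integra4} with the weight $\phi'(x/\la)=2\sech^2(2x/\la)$; the stated $\sech^4$ weight then follows from the elementary bound $\sech^4(y)\leq 4\sech^2(2y)=2\phi'(y)$, a consequence of $\cosh^2(2y)=(2\cosh^2 y-1)^2\leq 4\cosh^4 y$. Finally, \eqref{AS1} is extracted from \eqref{Integra4} by the standard argument: since $\int_2^\infty dt/\la(t)=+\infty$ (as $1/\la(t)=\log t/t^{1/2}$), if the inner integral remained bounded below by a positive constant for all $t\geq T_0$, the left-hand side of \eqref{Integra4} would diverge. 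Hence an increasing sequence $s_n\to\infty$ can be extracted along which the integrand tends to zero. The main obstacle throughout the argument is the first term in \eqref{dtJ}: a naive $L^2$ bound produces the non-integrable factor $\la'/\la \sim 1/t$, and the pointwise trade $|y|\phi'(y)\lesssim\psi'(y)$ is precisely what allows us to replace this bad factor by a bounded $|\la'|$ multiplying the already-integrable local-$L^2$ quantity from \eqref{Integra0}.
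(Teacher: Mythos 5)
Your proof is correct and follows essentially the same route as the paper's, which is only sketched there (it defers to the proof of Corollary \ref{Integra2} and singles out the nonlinear term, bounding it by $\frac{\ve}{\la(t)}\int \phi'(x/\la(t))\,u^2$ exactly as you do). You supply more detail than the paper on the $\la'/\la$ term and the $\phi^{(3)}$ term, but the mechanism --- exploiting the signed $(\partial_x u)^2$ term and reducing every other contribution either to an integrable-in-$t$ error or to the already-established local $L^2$ integrability \eqref{Integra0} --- is the same.
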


\begin{proof}
Similar to the proof of Corollary \ref{Integra2}. See also \cite[Lemma 2.1]{MPP} for a similar proof. The only difficult term is the nonlinear, which is bounded as follows:
\[
\abs{ \frac{1}{\la(t)} \int \phi ' \Big( \frac{x}{\la(t)} \Big) \Big( \frac23u^{3} + uf_2(u) - F_2(u)\Big)(t,x)dx} \lesssim \frac{\ve}{\la(t)} \int \phi ' \Big( \frac{x}{\la(t)} \Big) u^2.
\]
Using Corollary \ref{Integra2}, this quantity integrates in time.
\end{proof}

Now we prove the $L^2$ decay in \eqref{AS}. Recall the interval $I(t)$ defined in \eqref{Interval}.
\begin{cor}
Assume the hypothesis of Theorem \ref{TH1}, or Theorem \ref{TH2}. Then  
\be\label{AS0}
\lim_{t\to \pm\infty}\|u(t) \|_{L^2(I(t))} =0.
\ee
\end{cor}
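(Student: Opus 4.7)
The plan is to reduce the corollary to showing that
\[
K(t) := \int_{-\infty}^{\infty}\sech^2\!\left(\tfrac{x}{\la(t)}\right) u^2(t,x)\,dx \longrightarrow 0\quad \text{as }t\to +\infty,
\]
and then to upgrade the sequential convergence $K(t_n)\to 0$ from Corollary \ref{Integra2} to a full limit by combining a pointwise bound on $K'(t)$ with the integrability $\int_2^\infty K(t)/\la(t)\,dt<\infty$. For the reduction, on $I(t)$ we have $|x/\la(t)|\le C$, so $\sech^2(x/\la(t))\ge \sech^2(C)>0$ and hence $\|u(t)\|_{L^2(I(t))}^2 \le \sech^{-2}(C)\,K(t)$. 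The case $t\to -\infty$ follows from the symmetry $(t,x)\mapsto(-t,-x)$ of \eqref{gKdV}.

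The analytic key step is the bound $|K'(t)| \le C/\la(t)$ for $t\ge 2$. I would differentiate $K(t)$ in time and integrate by parts exactly as in the derivation of \eqref{dtJ}, with $\sech^2$ now playing the role of $\tfrac12\phi'$. This yields four contributions: a time-weight term of order $(\la'(t)/\la(t))\|u\|_{L^2}^2 = O(1/t)$; a dispersive term $-3\int(\sech^2)_x u_x^2 + \int(\sech^2)_{xxx}u^2$ of order $1/\la(t)$ via the uniform $H^1$ bound; and a nonlinear term involving $(\sech^2)_x\cdot(\tfrac23 u^3 + uf_2(u)-F_2(u))$ of order $\ve/\la(t)$ via \eqref{Smallness0}--\eqref{Smallness}. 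Since $1/t \le 1/\la(t) = (\log t)/t^{1/2}$ for $t$ large, all pieces combine into the asserted clean bound, with $C$ depending only on the $H^1$ and $L^1$ norms of $u$.

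Assume for contradiction that $K(t)\not\to 0$: there exist $\epsilon_0 > 0$ and $s_k\to +\infty$ with $K(s_k)\ge 2\epsilon_0$. Choose $h_k > 0$ so that $C\int_{s_k}^{s_k(1+h_k)} d\tau/\la(\tau) \le \epsilon_0$; a direct computation of $\int (\log\tau)/\tau^{1/2}\,d\tau$ yields the admissible choice $h_k \sim \epsilon_0/(C\,s_k^{1/2}\log s_k)$. The fundamental theorem of calculus then forces $K(t)\ge \epsilon_0$ throughout $[s_k, s_k(1+h_k)]$, and since $\la$ varies only by a bounded factor on this interval,
\[
\int_{s_k}^{s_k(1+h_k)} \frac{K(t)}{\la(t)}\,dt \;\ge\; c\,\epsilon_0^2
\]
for some absolute constant $c > 0$. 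Extracting a subsequence with pairwise disjoint intervals $[s_{k_j}, s_{k_j}(1+h_{k_j})]$ (possible since $s_k\to\infty$), the sum of these mass contributions diverges, contradicting $\int_2^\infty K(t)/\la(t)\,dt<\infty$ from Corollary \ref{Integra2}. Hence $K(t)\to 0$.

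The main obstacle is producing the derivative bound $|K'(t)| \le C/\la(t)$ at precisely the right scale: the dispersive piece saturates at $1/\la(t)$, which is exactly the scale at which the disjoint-intervals argument yields positive mass $\sim \epsilon_0^2$ per interval (any slower decay would not yield a contradiction). The smallness of $u$ in $H^1$ is what lets the nonlinear piece be absorbed into the same scale, making the argument uniform across the KdV case $f_2=0$ of Theorem \ref{TH1} and the perturbed case $f_2\neq 0$ of Theorem \ref{TH2}.
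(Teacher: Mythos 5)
Your proof is correct, but it reaches the conclusion by a genuinely different mechanism than the paper. The paper takes $\phi=\sech^6$ in \eqref{J} and shows that $\bigl|\frac{d}{dt}\mathcal J(t)\bigr|$ is bounded by $\frac{1}{\la(t)}\int\sech^4(x/\la(t))(u^2+(\partial_xu)^2)$, which is \emph{absolutely integrable in time} thanks to both \eqref{Integra0} and the derivative estimate \eqref{Integra4}; hence $\mathcal J(t)$ is Cauchy as $t\to\infty$ and its limit is $0$ along the sequence $t_n$ of \eqref{Integra1}. You instead accept the much cruder, non-integrable pointwise bound $|K'(t)|\le C/\la(t)$ (coming only from the uniform $H^1$ and $L^\infty$ bounds applied term by term in \eqref{dtJ}), and compensate by the disjoint-intervals/mass-counting contradiction against $\int_2^\infty K(t)\la(t)^{-1}dt<\infty$. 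The trade-off is real: your route uses only Step 1 of the paper (Corollary \ref{Integra2}) and needs neither \eqref{Integra4} nor the sequence $t_n$, so it is more self-contained and in fact shows that the full limit \eqref{AS0} already follows from the averaged $L^2$ control alone; the paper's route is shorter once Step 2 is in hand and is the template reused verbatim for the $H^1$ and Kato-smoothing conclusions in Step 3, where the same Cauchy argument is applied to $\mathcal K$. Two small points to tighten: choose $h_k$ so that $\int_{s_k}^{s_k(1+h_k)}\la(\tau)^{-1}d\tau$ is bounded \emph{below} as well as above by a multiple of $\epsilon_0/C$ (your asymptotic $h_k\sim\epsilon_0/(Cs_k^{1/2}\log s_k)$ does this, but the displayed condition with ``$\le$'' alone would not force the lower bound $c\,\epsilon_0^2$ per interval); and the phrase ``$\sech^2$ playing the role of $\tfrac12\phi'$'' should simply read that you apply \eqref{dtJ} with $\phi=\sech^2$, which is an admissible smooth bounded weight even though $\phi'$ is then sign-changing --- harmless here since you do not exploit any sign in the $(\partial_xu)^2$ term.
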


\begin{proof}
It is enough to prove the result in the case $t\geq 2$. Consider now the weight function $\phi(x) := \sech^6 x$  in \eqref{J}. Then, \eqref{dtJ} leads to the estimate (see \cite{MPP} for similar results)
\be\label{dtK_estimate}
\abs{\frac d{dt}\mathcal J(t)} \lesssim \frac1{\la(t)}\int \sech^4\Big( \frac{x}{\la(t)} \Big) (u^2 +(\partial_x u)^2) (t,x)dx.
\ee
This estimate, \eqref{Integra0} and \eqref{Integra4} imply that for $t<t_n$,
\[
\abs{\mathcal J(t) -\mathcal J(t_n) } =  o(1)\;\;\;\;\;\;\text{as}\;\;\;\;\;t\uparrow \,\infty,
\]
and using \eqref{Integra1} we conclude.
\end{proof}

\noindent
Consider again $\phi$ smooth and bounded. Let us define now the functional
\be\label{K}
\mathcal K(t):= \frac12\int_{-\infty}^{\infty} \phi\Big( \frac{x}{\la(t)} \Big) \left((\partial_xu)^2 -\frac23u^3 -2F_2(u)\right)(t,x)dx.
\ee
Notice that from the hypothesis of Theorems \ref{TH1} and \ref{TH2}, one has  $\sup_{t\in \R}\mathcal{K}(t) <+\infty$. We claim the following result.

\begin{lem}
We have
\be\label{dtK}
\begin{aligned}
\frac d{dt}\mathcal{K}(t)= &~ {} - \frac{\la'(t)}{2\la(t)} \int \frac{x}{\la(t)} \phi' \Big( \frac{x}{\la(t)} \Big) \left((\partial_xu)^2 -\frac23u^3 -2F_2(u)\right) \\
& ~ {} -\frac3{2\la(t)}  \int  \phi'\Big( \frac{x}{\la(t)} \Big) (\partial_x^2 u)^2  +  \frac1{2\la^3(t)} \int   \phi'''\Big( \frac{x}{\la(t)} \Big)(\partial_x u)^2  \\
& ~ {}  +  \frac1{3\la^3(t)} \int   \phi'''\Big( \frac{x}{\la(t)} \Big) u^3   + \frac1{\la^3(t)} \int   \phi'''\Big( \frac{x}{\la(t)} \Big)  F_2(u)  \\
& ~ {} + \frac2{\la(t)} \int \phi'\Big( \frac{x}{\la(t)} \Big) \partial_x( u^2 + f_2(u))   \partial_x u \\
& ~ {} + \frac2{\la^2(t)} \int \phi''\Big( \frac{x}{\la(t)} \Big) ( u^2 + f_2(u))   \partial_x u \\
& ~ {} - \frac1{2\la(t)} \int \phi'\Big( \frac{x}{\la(t)} \Big) ( u^2 + f_2(u))^2 .
\end{aligned}
\ee
\end{lem}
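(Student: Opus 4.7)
The identity \eqref{dtK} is proved by a direct computation: differentiate $\mathcal K(t)$ under the integral sign, substitute the gKdV equation $\pd_t u = -\pd_x(\pd_x^2 u + f(u))$, and integrate by parts until every derivative of $u$ of order $\geq 2$ has been transferred onto the weight. The first observation is that the integrand of $\mathcal K(t)$ is twice the density of the conserved $I_3$-energy, $\mathcal E(u) := \tfrac12 (\pd_x u)^2 - G(u)$ with $G(u) = \tfrac13 u^3 + F_2(u)$ and $G'=f$. The contribution from $\pd_t\,\phi(x/\la(t)) = -(\la'/\la^2)\, x\, \phi'(x/\la)$ produces exactly the first term of \eqref{dtK}, so the only nontrivial piece is $\int \phi(x/\la)\, \pd_t \mathcal E(u)\, dx$.

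For this, write $v := \pd_x^2 u + f(u)$ so that $\pd_t u = -\pd_x v$, and use the pointwise local conservation law
\[
\pd_t \mathcal E(u) \,=\, \pd_x\!\left[\tfrac12\, v^2 - (\pd_x u)(\pd_x v)\right],
\]
whose derivation is immediate from $\pd_t \mathcal E(u) = \pd_x[(\pd_x u)(\pd_t u)] - v\,\pd_t u$ and $\pd_t u = -\pd_x v$. One integration by parts reduces the problem to evaluating $-\tfrac1\la \int \phi'(x/\la)\bigl[\tfrac12\, v^2 - (\pd_x u)(\pd_x v)\bigr] dx$. Expanding $v^2 = (\pd_x^2 u)^2 + 2(\pd_x^2 u) f(u) + f(u)^2$ and $\pd_x v = \pd_x^3 u + \pd_x f(u)$ and integrating by parts term by term, two IBP on $\int \phi'(\pd_x u)(\pd_x^3 u)\, dx$ yield $\tfrac1{2\la^2}\int \phi'''(\pd_x u)^2 - \int \phi'(\pd_x^2 u)^2$; together with the $-\tfrac12(\pd_x^2 u)^2$ piece of $v^2$ this produces the coefficient $-\tfrac{3}{2\la}$ in front of $\int \phi'(\pd_x^2 u)^2$ and the $\phi'''$-weighted $(\pd_x u)^2$ term. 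The cross piece $(\pd_x^2 u)\, f(u)$, integrated by parts via $\pd_x^2 u = \pd_x(\pd_x u)$, delivers simultaneously a $\phi'' f(u)(\pd_x u)$ contribution and a $\phi'(\pd_x u)\pd_x f(u)$ contribution that combines with the one already present to form the coefficient $\tfrac{2}{\la}$ on the fourth line of \eqref{dtK}. The purely algebraic remainder $-\tfrac1{2\la}\int \phi' f(u)^2\, dx$ is exactly the last line.

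The only step that requires a second look is the appearance of the coefficient $\tfrac{2}{\la^2}$ \emph{together with} the two $\phi'''$-weighted nonlinear terms $\tfrac{1}{3\la^3}\int \phi''' u^3$ and $\tfrac{1}{\la^3}\int \phi''' F_2(u)$, rather than the single residue $\tfrac{1}{\la^2}\int \phi''(x/\la) f(u)(\pd_x u)\, dx$ produced by the previous step. The two expressions are equal: since $f(u)\,\pd_x u = \pd_x G(u)$, one further integration by parts gives
\[
\tfrac{1}{\la^2}\int \phi''(\tfrac{x}{\la})\, f(u)(\pd_x u)\, dx \,=\, -\tfrac{1}{\la^3}\int \phi'''(\tfrac{x}{\la})\, G(u)\, dx \,=\, -\tfrac{1}{3\la^3}\int \phi'''\, u^3 \,-\, \tfrac{1}{\la^3}\int \phi'''\, F_2(u),
\]
so the raw coefficient $\tfrac1{\la^2}$ can be rewritten as $\tfrac{2}{\la^2}$ plus the two $\phi'''$-terms, which is the ``balanced'' representation recorded in \eqref{dtK}. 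This redundant form is chosen because the later estimates exploit the different decay rates of $\phi''$ and $\phi'''$ separately.

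The main obstacle is therefore not conceptual but organizational: the many integrations by parts must be tracked carefully to match signs and coefficients, and the splitting in the last step must be inserted \emph{by hand}. No boundary terms appear since $\phi$ and all its derivatives are assumed bounded, and the identity is first derived for smooth solutions and then extended by density using the uniform bound $u \in L^\infty(\R: H^1(\R))$ together with the standard well-posedness theory for \eqref{gKdV}.
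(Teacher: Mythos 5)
Your computation is correct: I checked each integration by parts and the final rebalancing $\frac{1}{\la^2}\int \phi'' f(u)\partial_x u = \frac{2}{\la^2}\int \phi'' f(u)\partial_x u + \frac1{3\la^3}\int \phi''' u^3 + \frac{1}{\la^3}\int \phi''' F_2(u)$, and all coefficients match \eqref{dtK}. This is essentially the same direct argument as the paper's proof — the paper differentiates $\mathcal K$, integrates by parts and substitutes the equation, arriving at the same intermediate expression $-\frac1{2\la}\int\phi' v^2 + \frac1{\la}\int\phi'\,\partial_x u\,\partial_x v$ that you obtain via the pointwise flux identity $\partial_t\mathcal E = \partial_x\bigl[\tfrac12 v^2 - \partial_x u\,\partial_x v\bigr]$, which is only a cosmetic repackaging.
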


\begin{proof}
We compute: first,
\[
\begin{aligned}
\frac d{dt}\mathcal{K}(t)= &~ {} - \frac{\la'(t)}{2\la(t)} \int \frac{x}{\la(t)} \phi' \Big( \frac{x}{\la(t)} \Big) \left((\partial_xu)^2 -\frac23u^3 -2F_2(u)\right) \\
& ~ {} +  \int  \phi\Big( \frac{x}{\la(t)} \Big) (\partial_xu  \partial_{tx}u - u^2 \partial_tu -f_2(u)\partial_tu).
\end{aligned}
\]
Integrating by parts, 
\[
\begin{aligned}
\frac d{dt}\mathcal{K}(t)= &~ {} - \frac{\la'(t)}{2\la(t)} \int \frac{x}{\la(t)} \phi' \Big( \frac{x}{\la(t)} \Big) \left((\partial_xu)^2 -\frac23u^3 -2F_2(u)\right) \\
& ~ {} -  \int  \phi\Big( \frac{x}{\la(t)} \Big) (\partial_x^2 u  + u^2  + f_2(u))\partial_tu  - \frac1{\la(t)} \int  \phi'\Big( \frac{x}{\la(t)} \Big) \partial_xu  \partial_tu ,
\end{aligned}
\]
and using the equation,
\[
\begin{aligned}
\frac d{dt}\mathcal{K}(t)= &~ {} - \frac{\la'(t)}{2\la(t)} \int \frac{x}{\la(t)} \phi' \Big( \frac{x}{\la(t)} \Big) \left((\partial_xu)^2 -\frac23u^3 -2F_2(u)\right) \\
& ~ {} -\frac1{2\la(t)}  \int  \phi'\Big( \frac{x}{\la(t)} \Big) (\partial_x^2 u  + u^2  + f_2(u))^2  \\
& ~{} - \frac1{\la(t)} \int  \partial_x\left( \phi'\Big( \frac{x}{\la(t)} \Big) \partial_xu \right) (\partial_x^2 u  + u^2  + f_2(u)) .
\end{aligned}
\]
Simplifying,
\[
\begin{aligned}
\frac d{dt}\mathcal{K}(t)= &~ {} - \frac{\la'(t)}{2\la(t)} \int \frac{x}{\la(t)} \phi' \Big( \frac{x}{\la(t)} \Big) \left((\partial_xu)^2 -\frac23u^3 -2F_2(u)\right) \\
& ~ {} -\frac1{2\la(t)}  \int  \phi'\Big( \frac{x}{\la(t)} \Big) (\partial_x^2 u  + u^2  + f_2(u))^2  \\
& ~ {} - \frac1{\la^2(t)} \int   \phi''\Big( \frac{x}{\la(t)} \Big) \partial_xu (\partial_x^2 u  + u^2  + f_2(u)) \\
& ~ {} - \frac1{\la(t)} \int \phi'\Big( \frac{x}{\la(t)} \Big) (\partial_x^2 u  + u^2  + f_2(u) -u^2 -f_2(u))   (\partial_x^2 u  + u^2  + f_2(u)) ,
\end{aligned}
\]
and
\[
\begin{aligned}
\frac d{dt}\mathcal{K}(t)= &~ {} - \frac{\la'(t)}{2\la(t)} \int \frac{x}{\la(t)} \phi' \Big( \frac{x}{\la(t)} \Big) \left((\partial_xu)^2 -\frac23u^3 -2F_2(u)\right) \\
& ~ {} -\frac3{2\la(t)}  \int  \phi'\Big( \frac{x}{\la(t)} \Big) ((\partial_x^2 u)^2  + (u^2  + f_2(u))^2 + 2\partial_x^2 u  ( u^2  + f_2(u) ))  \\
& ~ {}   + \frac1{2\la^3(t)} \int   \phi'''\Big( \frac{x}{\la(t)} \Big)(\partial_x u)^2  +  \frac1{3\la^3(t)} \int   \phi'''\Big( \frac{x}{\la(t)} \Big) u^3  \\
& ~ {} + \frac1{\la^3(t)} \int   \phi'''\Big( \frac{x}{\la(t)} \Big)  F_1(u)  \\
& ~ {} + \frac1{\la(t)} \int \phi'\Big( \frac{x}{\la(t)} \Big) ( u^2 + f_2(u))   \partial_x^2 u + \frac1{\la(t)} \int \phi'\Big( \frac{x}{\la(t)} \Big) ( u^2 + f_2(u))^2 ,
\end{aligned}
\]
which gives \eqref{dtK} after some integration by parts.
\end{proof}

Note that all the quantities in the RHS of \eqref{dtK} are known to be integrable in time using previous results, except the term with two derivatives $-\frac3{2\la(t)}  \int  \phi'\Big( \frac{x}{\la(t)} \Big) (\partial_x^2 u)^2$. An important corollary of this previous result is the following Kato-type smoothing estimate: 
\begin{cor}
Consider the weight function $\phi(x) := \tanh (3x)$, so that $\phi'(x) = 3\sech^2 (3x)$. Then we have 
\be\label{Integra5}
\int_2^\infty\frac1{\la(t)} \int_{-\infty}^{\infty} \sech^6 \Big( \frac{x}{\la(t)} \Big) (\partial_x^2 u)^2(t,x)dx dt <+\infty.
\ee
\end{cor}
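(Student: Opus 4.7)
The plan is to integrate identity \eqref{dtK} from time $2$ to $T$ and show that every term on the right-hand side, with the single exception of the positive quadratic term $-\frac3{2\la(t)}\int \phi'(x/\la(t))(\partial_x^2 u)^2\,dx$, is integrable in $t$ on $[2,\infty)$. Since the functional $\mathcal{K}(t)$ is uniformly bounded (by $\|\phi\|_\infty$ times $\sup_t \|u(t)\|_{H^1}^2 \lesssim \ve^2$ under \eqref{Smallness0} or \eqref{Smallness}), rearrangement then produces a uniform-in-$T$ bound for $\int_2^T \la^{-1}(t)\int \phi'(x/\la)(\partial_x^2 u)^2\,dx\,dt$, which, combined with the pointwise comparability $\sech^6(y) \le 16\,\sech^2(3y) = \tfrac{16}{3}\phi'(y)$, yields \eqref{Integra5} by monotone convergence.

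For the first term in \eqref{dtK}, observe that $|y\phi'(y)| = 3|y|\sech^2(3y) \lesssim \sech^4(y)$, since $|y|\sech^2(3y)/\sech^4(y) \sim |y|e^{-2|y|}\to 0$. Hence its $(\partial_x u)^2$ contribution is bounded by $C\la'(t)\cdot\la^{-1}(t)\int\sech^4(x/\la)(\partial_x u)^2\,dx$, the product of the bounded factor $\la'(t)$ and a quantity integrable in $t$ by \eqref{Integra4}. The $u^3$ and $F_2(u)$ pieces are controlled by extracting $\ve u^2$ via \eqref{Smallness0} (or \eqref{Smallness}) and \eqref{f(s)}, then invoking \eqref{Integra0}. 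The terms carrying the prefactor $\la^{-3}(t)$ (the third through fifth) are handled by writing $\la^{-3}(t) = \la^{-2}(t)\cdot\la^{-1}(t)$ and exploiting that $\la^{-2}(t) = \log^2 t/t$ is bounded on $[2,\infty)$; the remaining local $L^2$ integrals are absorbed by \eqref{Integra0} or \eqref{Integra4}. The sixth and eighth terms satisfy a pointwise bound of the form $\ve\,\la^{-1}(t)\int\sech^4(x/\la)\bigl((\partial_x u)^2 + u^2\bigr)\,dx$, again integrable in $t$.

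The principal obstacle is the seventh term $\frac{2}{\la^2(t)}\int\phi''(x/\la)(u^2 + f_2(u))\partial_x u\,dx$: a naive Cauchy--Schwarz estimate produces at best a bound of order $\ve^3/\la^2(t) \simeq \ve^3\log^2 t/t$, which is \emph{not} integrable on $[2,\infty)$. The fix is to exploit the perfect-derivative structure $u^2\partial_x u = \tfrac13\partial_x(u^3)$ and $f_2(u)\partial_x u = \partial_x F_2(u)$: integration by parts in $x$ converts this term into
\[
-\frac{2}{\la^3(t)}\int \phi'''(x/\la)\Big(\tfrac13 u^3 + F_2(u)\Big)\,dx,
\]
which can then be controlled exactly as the fourth and fifth terms above.

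Assembling these bounds, integration of \eqref{dtK} over $[2,T]$ produces
\[
\int_2^T \frac{3}{2\la(t)}\int\phi'(x/\la)(\partial_x^2 u)^2\,dx\,dt \;\le\; |\mathcal{K}(2)| + |\mathcal{K}(T)| + C\ve^2 \;\le\; C'\ve^2,
\]
uniformly in $T$. Since the left-hand side is nondecreasing in $T$ and bounded, letting $T\to\infty$ yields \eqref{Integra5}. The whole argument is therefore elementary once the integration by parts in the seventh term (the only non-routine step) is carried out.
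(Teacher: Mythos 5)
Your proposal is correct and follows essentially the same route the paper indicates for this corollary (integrate \eqref{dtK} in time and absorb every right-hand-side term except the sign-definite $(\partial_x^2u)^2$ one using the already-established bounds \eqref{Integra0} and \eqref{Integra4} together with the uniform bound on $\mathcal K$); the paper simply leaves all these details to the reader. One small remark: the seventh term is not actually the obstacle you describe, since $1/\la(t)$ is bounded on $[2,\infty)$ (indeed $\la(t)\geq e/2$ there), so the direct local estimate
$\la^{-2}(t)\int |\phi''(x/\la)|\,|u^2+f_2(u)|\,|\partial_x u|\,dx \lesssim \la^{-1}(t)\int \sech^4(x/\la)\bigl(\ve^2 u^2+(\partial_x u)^2\bigr)dx$
is already integrable in time by \eqref{Integra0} and \eqref{Integra4}, although your integration by parts is also a valid (and slightly more elegant) way to handle it.
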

This last estimate is proved using \eqref{Integra0} and \eqref{Integra4}. By taking $\la(t)=c_0$, and using \eqref{Integra0} and \eqref{Integra4}, we also have \eqref{Integra50}.

\bigskip

\noindent
 \underline{ Step 3 :} End of proof of Theorems \ref{TH1} and \ref{TH2}.

\medskip

Consider the weight $\phi(x):= \sech^{8}(x)$ in \eqref{K}. From \eqref{dtK} we have
\[
\abs{\frac d{dt}\mathcal K(t)} \lesssim \frac1{\la(t)}\int \sech^6\Big( \frac{x}{\la(t)} \Big) (u^2 +(\partial_x u)^2 + (\partial_x^2 u)^2) (t,x)dx.
\]
This estimate, \eqref{Integra0}, \eqref{AS1} and \eqref{Integra5} imply that for $t<s_n$,
\[
\abs{\mathcal K(t) -\mathcal K(s_n) } = o(1)\;\;\;\;\;\;\text{as}\;\;\;\;\;t\uparrow \,\infty,
\]
and using \eqref{AS0}, \eqref{AS1} and \eqref{Smallness0} or  \eqref{Smallness} we conclude.

\bigskip

\end{document}